\theoremstyle{plain}
\newtheorem{theorem}{Theorem}[section]
\newtheorem{lemma}[theorem]{Lemma}
\newtheorem{proposition}[theorem]{Proposition}
\theoremstyle{remark}
\newtheorem{definition}[theorem]{Definition}
\newtheorem{example}[theorem]{Example}
\newtheorem{remark}[theorem]{Remark}
\DeclareMathOperator{\real}{Re}
\DeclareMathOperator{\mob}{M\ddot{o}b}
\newcommand{\cov}{\mathrm{Cov}}
\newcommand{\N}{\mathbb{N}}
\newcommand{\R}{\mathbb{R}}
\newcommand{\C}{\mathbb{C}}
\newcommand{\sphr}{\mathbb{S}}
\newcommand{\pr}{\mathbb{P}}
\newcommand{\E}{\mathbb{E}}
\newcommand{\wto}{\stackrel{\op{w}}{\to}}
\newcommand{\ssim}{\mathord{\sim}}
\newcommand{\op}[1]{\operatorname{#1}}
\newcommand{\mcal}[1]{\mathcal{#1}}
\newcommand{\mfk}[1]{\mathfrak{#1}}
\newcommand{\msr}[1]{\mathscr{#1}}
\newcommand{\mbf}[1]{\mathbf{#1}}
\newcommand{\indc}[1]{\mathds{1}\left\{#1\right\}}
\newcommand{\inn}[2]{\langle #1, #2 \rangle}
\newcommand{\into}{\hookrightarrow}
\newcommand{\norm}[1]{\lVert#1\rVert}
\newcommand{\trace}{\mathrm{Tr}}
\newcommand{\vout}{V_{\op{out}}}
\newcommand{\vin}{V_{\op{in}}}
\newcommand{\pin}{\pi_{\op{in}}}
\newcommand{\pim}{\pi_{\op{mix}}}
\newcommand{\pout}{\pi_{\op{out}}}
\newcommand{\calP}{\mathcal{P}}
\newcommand{\calK}{\mathcal{K}}
\newcommand{\ubp}{\mcal{UBP}}
\newcommand{\bK}{\mbf{K}}
\newcommand{\ten}{\mbf{T}_{d, N}}
\newcommand{\nc}{\mathcal{NC}}
\DeclareMathOperator{\per}{per}
\newcommand{\sym}{\mathrm{Sym}}
\DeclareMathOperator{\sign}{sgn}
\newcommand{\tdn}[2]{\mbf{T}_{d, N}(#1|#2)}
\newcommand{\un}[2]{u_N(#1|#2)}
\begin{document}
\title[Contracted tensor ensembles]{Spectral asymptotics for contracted tensor ensembles}

\author[Benson Au]{Benson Au$^\dagger$}
\thanks{$^\dagger$Department of Statistics, University of California, Berkeley, \href{mailto:bensonau@berkeley.edu}{bensonau@berkeley.edu}}
\address{Department of Statistics\\
         University of California, Berkeley\\
         367 Evans Hall \# 3860\\
         Berkeley, CA 94720-3860\\
         USA}
       \email{\href{mailto:bensonau@berkeley.edu}{bensonau@berkeley.edu}}

\author[Jorge Garza-Vargas]{Jorge Garza-Vargas$^\ddagger$}
\thanks{$^\ddagger$CMS, California Institute of Technology, \href{mailto:jgarzav@caltech.edu}{jgarzav@caltech.edu}}
\thanks{JGV was supported in part by NSF grant CCF-2009011.}
\address{Department of Computing and Mathematical Sciences\\
         California Institute of Technology\\
         1200 E.\@ California Blvd., MC 305-16\\
         Pasadena, CA 91125-2100\\
         USA}
       \email{\href{mailto:jgarzav@caltech.edu}{jgarzav@caltech.edu}}

\date{\today}

\begin{abstract}\label{sec:abstract}
Let $\mbf{T}_{d, N}: \Omega \to \R^{N^d}$ be a random real symmetric Wigner-type tensor. For unit vectors $(u_N^{(i, j)})_{i \in I, j \in [d-2]} \subset \sphr^{N-1}$, we study the contracted tensor ensemble
\[
    \left(\frac{1}{\sqrt{N}}\mbf{T}_{d, N}\left[u_N^{(i, 1)} \otimes \cdots \otimes u_N^{(i, d-2)}\right]\right)_{i \in I}.
\]
For large $N$, we show that the joint spectral distribution of this ensemble is well-approximated by a semicircular family $(s_i)_{i \in I}$ whose covariance $(\bK_{i, i'}^{(N)})_{i, i'\in I}$ is given by the rescaled overlaps of the corresponding symmetrized contractions
\[
    \mbf{K}_{i, i'}^{(N)} = \frac{1}{d(d-1)}\inn{u_N^{(i, 1)} \odot \cdots \odot u_N^{(i, d-2)}}{u_N^{(i', 1)} \odot \cdots \odot u_N^{(i', d-2)}},
\]
which is the true covariance of the ensemble up to a $O_d(N^{-1})$ correction. We further characterize the extreme cases of the variance $\mbf{K}_{i, i}^{(N)} \in [\frac{1}{d!}, \frac{1}{d(d-1)}]$. Our analysis relies on a tensorial extension of the usual graphical calculus for moment method calculations in random matrix theory, allowing us to access the independence in our random tensor ensemble.
\end{abstract}

\maketitle
\setcounter{tocdepth}{2}
\tableofcontents

\section{Introduction}\label{sec:intro}

The storied history of random matrix theory (RMT) has its beginnings in applications: first, in multivariate statistics \cite{Wis28}; then, famously, in nuclear physics \cite{Wig55}. The spectacular progress of RMT in the interim, documented for example by \cite{Meh04,AGZ10,BS10,EY17}, shows a flourishing mathematical discipline with ever-increasing ties to the applied sciences. Indeed, by now, RMT techniques comprise a robust and increasingly influential set of tools for a wide range of disparate fields \cite{ABDF11,BDS16,MS17}.

In modern applications, one often encounters higher order structures. In this generality, a matrix is replaced by a tensor \cite{Lim21}.

\begin{definition}[Tensor]\label{defn:tensor}
A \emph{(real) $d$-th order tensor $\mbf{A}_{d, N_1, \ldots, N_d}$ of dimension $(N_1, \ldots, N_d)$} is a tuple
\[
    \big(\mbf{A}_{d, N_1, \ldots, N_d}(k_1, \ldots, k_d)\big)_{(k_1, \ldots, k_d) \in [N_1] \times \cdots \times [N_d]} \in \R^{N_1 \cdots N_d} \cong \R^{N_1} \otimes \cdots \otimes \R^{N_d}.
\]
A tensor of the form $\mbf{A}_{d, N_1, \ldots, N_d} = v_1 \otimes \cdots \otimes v_d$ for $v_i \in \R^{N_i}$ is said to be \emph{pure}. We write $\mcal{T}_{d, N_1, \ldots, N_d}$ for the inner product space of all $d$-th order tensors of dimension $(N_1, \ldots, N_d)$. If $N_1 = \cdots = N_d = N$, then we abbreviate the notation to $\mbf{A}_{d, N}$ and $\mcal{T}_{d, N}$ respectively.

For $\sigma \in \mfk{S}_{d}$ and $\mbf{A}_{d, N} \in \mcal{T}_{d, N}$, we define the permuted tensor $\mbf{A}_{d, N}^\sigma \in \mcal{T}_{d, N}$ by
\[
    \mbf{A}_{d, N}^\sigma(k_1, \ldots, k_d) := \mbf{A}_{d, N}(k_{\sigma(1)}, \ldots, k_{\sigma(d)}). 
\]
A tensor $\mbf{A}_{d, N}$ is \emph{symmetric} if it is invariant under permutation:
\[
    \mbf{A}_{d, N}^\sigma = \mbf{A}_{d, N}, \quad \forall \sigma \in \mfk{S}_d.
\]
Similarly, a tensor $\mbf{A}_{d, N}$ is \emph{antisymmetric} if
\[
    \mbf{A}_{d, N}^\sigma = \sign(\sigma) \mbf{A}_{d, N}, \quad \forall \sigma \in \mfk{S}_d.
\]
We write $\mcal{S}_{d, N}$ for the subspace of symmetric tensors and $\sym: \mcal{T}_{d, N} \to \mcal{T}_{d, N}$ for the symmetrization operator
\[
    \sym(\mbf{A}_{d, N}) = \frac{1}{d!}\sum_{\sigma \in \mfk{S}_d} \mbf{A}_{d, N}^\sigma,
\]
which is a projection $\sym = \sym^2 = \sym^*$ onto $\mcal{S}_{d, N}$. For pure tensors, we use the notation
\[
    v_1 \odot \cdots \odot v_d := \sym(v_1 \otimes \cdots \otimes v_d) = \frac{1}{d!}\sum_{\sigma \in \mfk{S}_d} v_{\sigma(1)} \otimes \cdots \otimes v_{\sigma(d)}.
\]
\end{definition}

The spectral theory of symmetric tensors for $d \geq 3$ can be quite different from the familiar case of symmetric matrices $d = 2$. Indeed, simply finding the right definitions in this setting already constitutes a major conceptual challenge. We assume hereafter that $d \geq 3$. We focus on one particular definition of eigenvalues for tensors, formulated independently by Lim under the name \emph{$\ell^2$-eigenvalues} \cite{Lim05} and Qi under the name \emph{Z-eigenvalues} \cite{Qi05}. We refer the reader to \cite{QL17} for a comprehensive treatment of the general theory. 

\begin{definition}[Tensor operations]\label{defn:tensor_operations}
Let $\mbf{A}_{d, N}$ be a symmetric tensor. For $p \leq d$ and $\mbf{B}_{p, N} \in \mcal{T}_{p, N}$, we define the \emph{contracted tensor} $\mbf{A}_{d, N}[\mbf{B}_{p, N}] \in \mcal{S}_{d-p, N}$ by
\[
    \mbf{A}_{d, N}[\mbf{B}_{p, N}](k_1, \ldots, k_{d-p}) = \sum_{(l_1, \ldots, l_p) \in [N]^p} \mbf{A}_{d, N}(k_1, \ldots, k_{d-p}, l_1, \ldots, l_p)\mbf{B}_{p, N}(l_1, \ldots, l_p).
\]
The symmetry of $\mbf{A}_{d, N}$ implies that this definition does not depend on which coordinates we choose to contract. Consequently,
\[
    \mbf{A}_{d, N}[\sym(\mbf{B}_{p, N})] = \mbf{A}_{d, N}[\mbf{B}_{p, N}].
\]
We say that $\mbf{A}_{d, N}$ has an \emph{eigenvalue} at $\lambda \in \R$ if there exists a unit vector $u_N \in \sphr^{N-1}$ such that
\[
    \mbf{A}_{d, N}[u_N^{\otimes d-1}] = \lambda u_N,
\]
in which case we call $u_N$ an \emph{eigenvector}.
\end{definition}

This definition of an eigenpair $(\lambda, u_N) \in \R \times \sphr^{N-1}$ can be related to the usual definition for matrices through the following straightforward observation:
\[
  \mbf{A}_{d, N}[u_N^{\otimes d-1}] = \lambda u_N \iff \mbf{A}_{d, N}[u_N^{\otimes d-2}]u_N = \lambda u_N.
\]
In other words, $\lambda$ is an eigenvalue of the real symmetric tensor $\mbf{A}_{d, N}$ with eigenvector $u_N$ iff $\lambda$ is an eigenvalue of the real symmetric matrix $\mbf{A}_{d, N}[u_N^{\otimes d-2}]$ with eigenvector $u_N$. One can then hope to analyze the spectral properties of a tensor via appropriate contractions.

For a random symmetric tensor $\mbf{T}_{d, N}: \Omega \to \mcal{S}_{d, N}$, de Morais Goulart, Couillet, and Comon defined the \emph{contraction ensemble of $\mbf{T}_{d, N}$} as the family of random matrices
\[
  \mcal{M}(\mbf{T}_{d, N}) = \left(\mbf{T}_{d, N}[u_N^{\otimes d-2}]\right)_{u_N \in \sphr^{N-1}}.
\]
For $d = 3$ and $\mbf{T}_{3, N}$ distributed according to the Gaussian orthogonal tensor ensemble (GOTE), they proved that the empirical spectral distribution of $\frac{1}{\sqrt{N}}\mbf{T}_{3, N}[u_N]$ converges weakly almost surely to the semicircle distribution of variance $\frac{1}{6}$ in the large $N$ limit for any sequence of unit vectors $(u_N)_{N \in \N}$ \cite[Theorem 6]{dCC21}. In particular, the variance of the limiting semicircle does not depend on the choice of $(u_N)_{N \in \N}$. Their proof relies on an analysis of the resolvent using Stein's lemma, exploiting the Gaussian nature of the entries and the tractability of small order $d = 3$. Naturally, we are led to consider the questions of
\begin{enumerate}[label=(Q\arabic*)]
    \item \label{question:higher_order} higher order $d \geq 4$;
    \item \label{question:universality} universality for general tensor distributions;
    \item \label{question:general_contractions} general contractions $u_N^{(1)} \otimes \cdots \otimes u_N^{(d-2)} \neq u_N^{\otimes d-2}$;
    \item \label{question:joint_distribution} the joint spectral distribution of the full family of matrices in the contracted tensor ensemble with general contractions.
\end{enumerate}

We emphasize the complications that already arise just in \ref{question:higher_order} and \ref{question:universality}. While the entries of the GOTE are independent up to the symmetry constraint, the same is not true in general for the contracted GOTE $\mbf{T}_{d, N}[u_N^{(1)} \otimes \cdots \otimes u_N^{(d-2)}]$. For $d = 3$, the contraction operation can only introduce dependence between entries of $\mbf{T}_{3, N}[u_N]$ belonging to the same row or the same column; for $d \geq 4$, any two entries of the contracted GOTE can now be dependent. Moreover, the resulting correlation structure does not fit into the universality scheme of \cite{BMP15}, which would have allowed one to deduce the general result from the Gaussian case. See Section \ref{sec:existing_results} for the precise details. To these ends, we consider an analogue of Wigner matrices for tensors.

\begin{definition}[Wigner tensor]\label{defn:wigner_tensor}
Let $\big(X_N(k_1, \ldots, k_d)\big)_{N \in \N, 1 \leq k_1 \leq \cdots \leq k_d \leq N}$ be a family of independent random variables such that
\begin{enumerate}[label=(\roman*)]
\item \label{cond:centered} the off-diagonal entries $\#(\{k_1, \ldots, k_d\}) \neq 1$ are centered;
\item \label{cond:gote_variance} the entries with at least three distinct indices $\#(\{k_1, \ldots, k_d\}) \geq 3$ have variance $\binom{d}{b_1, \ldots, b_N}^{-1}$, where $b_l = \#(\{s \in [d] : k_s = l\})$ is the multiplicity of $l \in [N]$ as an index;
\item \label{cond:moment_bound} we have a strong uniform control on the moments:
\begin{equation}\label{eq:moment_bound}
  \sup_{N \in \N} \sup_{1 \leq k_1 \leq \cdots \leq k_d \leq N} \E\big[|X_N(k_1, \ldots, k_d)|^m\big] \leq C_m < \infty, \quad \forall m \in \N.
\end{equation}
\end{enumerate}
We call the random symmetric tensor defined by
\[
  \mbf{T}_{d, N}(k_1, \ldots, k_d) = X_N(k_{i_1}, \ldots, k_{i_d}), \quad \forall k_1, \dots, k_d \in [N]  
\]
an \emph{unnormalized $d$-th order Wigner tensor}, where $k_{i_1} \leq \cdots \leq k_{i_d}$ is any nondecreasing ordering of the indices $k_1, \ldots, k_d$. Hereafter, when we refer to a tensor $\mbf{T}_{d, N}$, we implicitly refer to a sequence of tensors $(\mbf{T}_{d, N})_{N \in \N}$.
\end{definition}

\begin{remark}\label{rem:gote_variance}
The variance profile in \ref{cond:gote_variance} is chosen to match the $\op{GOTE}(d, N)$, which is defined by the density
\[
  f_{d, N}(T) = \frac{1}{Z_d(N)} e^{-\norm{T}_2^2/2}, \quad T \in \mcal{S}_{d, N}.
\]
Here, $\norm{\cdot}_2$ is the usual Euclidean norm on $\R^{N^d}$ and $Z_d(N)$ is a normalizing constant. The condition in \ref{cond:gote_variance} omits the entries with at most two distinct indices because they do not contribute to the leading order term in the trace expansion. Our results extend to any variance profile that is constant on the integer partition of $d$ determined by $(b_l)_{l \in [N]}$; however, in general, the resulting covariance $(\mbf{K}_{i, i'}^{(N)})_{i, i' \in I}$ no longer admits a simple formulation as below (cf.\ Section \ref{sec:general_variance_profile}).
\end{remark}

For any sequence of families $(u_N^{(i, j)})_{i \in I, j \in [d-2]} \subset \sphr^{N-1}$, we define $\bK^{(N)}=(\mbf{K}_{i, i'}^{(N)})_{i, i' \in I}$ as the rescaled Gram matrix of the symmetrized pure tensors $(u_N^{(i, 1)} \odot \cdots \odot u_N^{(i, d-2)})_{i \in I}$:
\begin{equation}\label{eq:gram_matrix}
  \mbf{K}_{i, i'}^{(N)} = \frac{1}{d(d-1)} \inn{u_N^{(i, 1)} \odot \cdots \odot u_N^{(i, d-2)}}{u_N^{(i', 1)} \odot \cdots \odot u_N^{(i', d-2)}}.
\end{equation}
We can now state our main result in answer to questions \ref{question:higher_order}-\ref{question:joint_distribution}.

\begin{theorem}\label{thm:contracted_tensor_ensemble}
Let $\mbf{T}_{d, N}$ be a Wigner tensor. Then the corresponding contracted tensor ensemble $\left(\frac{1}{\sqrt{N}}\mbf{T}_{d, N}\left[u_N^{(i, 1)} \otimes \cdots \otimes u_N^{(i, d-2)}\right]\right)_{i \in I}$ converges in noncommutative distribution almost surely iff the limits
\[
     \mbf{K}_{i, i'} = \lim_{N \to \infty} \mbf{K}_{i, i'}^{(N)}
\]
exist for every $i, i' \in I$, in which case the limit object is a semicircular family $(s_i)_{i \in I}$ of covariance $(\mbf{K}_{i, i'})_{i, i' \in I}$. In particular, the empirical spectral distribution of the single matrix model $\frac{1}{\sqrt{N}}\mbf{T}_{d, N}\left[u_N^{(i, 1)} \otimes \cdots \otimes u_N^{(i, d-2)}\right]$ converges weakly almost surely to a semicircle distribution:
\[
    \frac{1}{N} \sum_{k = 1}^N \delta_{\lambda_k\left(\frac{1}{\sqrt{N}}\mbf{T}_{d, N}\left[u_N^{(i, 1)} \otimes \cdots \otimes u_N^{(i, d-2)}\right]\right)} \wto \frac{1}{2\pi \mbf{K}_{i, i}} (4\mbf{K}_{i, i} - x^2)_+^{1/2}\, dx.
\]
\end{theorem}

The proof of Theorem \ref{thm:contracted_tensor_ensemble} relies on a tight control of the mixed moments of the contracted tensor ensemble. To formulate this precisely, we introduce some notation. For any $i_1, \dots, i_m \in I$, we define the product
\begin{align*}
  P_N(i_1, \dots, i_m) := &\left(\frac{1}{\sqrt{N}} \mbf{T}_{d, N}\left[u_N^{(i_1, 1)} \otimes \cdots \otimes u_N^{(i_1, d-2)}\right]\right) \\
  &\cdots \left(\frac{1}{\sqrt{N}} \mbf{T}_{d, N}\left[u_N^{(i_m, 1)} \otimes \cdots \otimes u_N^{(i_m, d-2)}\right]\right).
\end{align*}
The following theorem quantifies the approximation of our tensor ensemble by a semicircular family.

\begin{theorem}\label{thm:concentrationofmixedmoments}
Let $I_0 \subset I$ be a finite subset. For any $m_0 \in \N$ and $M, \varepsilon > 0$, there is a constant $C = C(d, \#(I_0), m_0, M, \varepsilon)$ independent of $N$ and the choice of contracting vectors $(u_N^{(i, j)})_{i\in I, j \in [d-2]} \subset \sphr^{N-1}$ such that\small
\begin{equation}\label{eq:concentration_of_mixed_moments}
\mathbb{P}\left[ \max_{m\leq m_0, i_1, \dots, i_m\in I_0}  \left|\frac{1}{N}\trace\big(P_N(i_1, \dots, i_m)\big) -\varphi\big(s_{i_1}^{(N)} \cdots s_{i_m}^{(N)}\big)\right| > \varepsilon \right] \leq CN^{-M},
\end{equation}\normalsize
where $\varphi(s_{i_1}^{(N)}\cdots s_{i_m}^{(N)})$ is the mixed moment of a semicircular family $(s_i^{(N)})_{i \in I}$ of covariance $\bK^{(N)}$. 
\end{theorem}

In particular, Theorem \ref{thm:concentrationofmixedmoments} does \emph{not} assume that the limits $\lim_{N \to \infty} \mbf{K}_{i, i'}^{(N)}$ exist (cf.\ Theorem \ref{thm:contracted_tensor_ensemble}). In any case, it is still true that the joint spectral distribution of the contracted tensor ensemble is approximately multivariate semicircular in high dimensions. The only possibility precluding convergence is the joint spectral distribution accumulating around semicircular families with distinct covariance structures (rather than a different type of distribution entirely). Nevertheless, one can use the Cauchy-Schwarz inequality and the fact that $\norm{\sym} \leq 1$ to show that
\begin{equation}\label{eq:covariance_bound}
    |\mbf{K}_{i, i'}^{(N)}| \leq \frac{1}{d(d-1)}.
\end{equation}
Thus, one always has convergence to a specific semicircular family along some subsequence. See Figure \ref{figure:combined_marginals} and Figure \ref{figure:sum_anticom} for simulations.

\begin{example}[Common contracting vectors]\label{eg:common_contracting_vector}
In the case of common contracting vectors $u_N^{(i, j)} \equiv u_N^{(i)}$ and $u_N^{(i', j)} \equiv u_N^{(i')}$, the inner product in \eqref{eq:gram_matrix} amounts to
\[
    \mbf{K}_{i, i'}^{(N)} = \frac{1}{d(d-1)} \inn{u_N^{(i)}}{u_N^{(i')}}^{d-2}.
\]
In particular, the marginals of the limiting semicircular family $(s_i)_{i \in I}$ have variance $\mbf{K}_{i, i} = \frac{1}{d(d-1)}$ regardless of the choice of contracting vectors $(u_N^{(i)})_{N \in \N}$ (cf.\ \cite[Theorem 6]{dCC21}), and asymptotic orthogonality $\inn{u_N^{(i)}}{u_N^{(i')}} = o(1)$ is equivalent to asymptotic freeness $\mbf{K}_{i, i'} = 0$ (cf.\ \cite[Theorem 2.6.2]{VDN92}).
\end{example}

\begin{example}[Extreme cases for the variance]\label{eg:extreme_cases}
The previous example shows that the upper bound in \eqref{eq:covariance_bound} is tight. In fact, this is essentially the only way to achieve the upper bound. Indeed, since $\sym$ is a projection onto $\mcal{S}_{d-2, N}$,
\[
    \mbf{K}_{i, i}^{(N)} = \frac{1}{d(d-1)} \iff u_N^{(i, 1)} \odot \cdots \odot u_N^{(i, d-2)} = u_N^{(i, 1)} \otimes \cdots \otimes u_N^{(i, d-2)}.
\]
In other words, the upper bound is achieved iff the pure tensor is symmetric. For unit vectors $u_N^{(i, j)} \in \sphr^{N-1}$, this is equivalent to $u_N^{(i, j)} = \pm u_N^{(i, j')}$, a common contracting vector up to sign.

Given that the upper bound for $\mbf{K}_{i, i}^{(N)}$ is achieved iff the pure tensor is symmetric, one naturally expects a lower bound that is achieved iff the pure tensor is maximally asymmetric in some sense. To obtain such a characterization, we use the permanental representation of the symmetrized inner product \cite[Theorem 2.2]{Min78}:
\[
    \inn{u_N^{(i, 1)} \odot \cdots \odot u_N^{(i, d-2)}}{u_N^{(i', 1)} \odot \cdots \odot u_N^{(i', d-2)}}
    = \frac{1}{(d-2)!}\per\left[\left(\inn{u_N^{(i, j)}}{u_N^{(i', k)}}\right)_{j, k= 1}^{d-2}\right].
\]
When $i=i'$, this is the permanent of the Gram matrix
\[
    \mbf{G}_{N}^{(i)} = \left(\inn{u_N^{(i, j)}}{u_N^{(i, k)}}\right)_{j, k= 1}^{d-2}.
\]
An inequality of Marcus \cite[Theorem 2]{Mar64}, valid for positive semidefinite matrices, states that 
\[
    \per(\mbf{G}_N^{(i)}) \geq \prod_{j=1}^{d-2} \mbf{G}_N^{(i)}(j, j)
\]
with equality iff $\mbf{G}_N^{(i)}$ has a zero row or $\mbf{G}_N^{(i)}$ is diagonal. Since $\mbf{G}_N^{(i)}(j, j) \equiv 1$, we see that $\mbf{K}_N^{(i, i)} \geq \frac{1}{d!}$ with equality iff the contracting vectors $(u_N^{(i, j)})_{j \in [d-2]} \subset \sphr^{N-1}$ are orthonormal.
\end{example}

\begin{remark}[Kolmogorov-Smirnov distance]\label{rem:kolmogorov_smirnov}
In the single-matrix model $\#(I)=1$, the uniform bound $\bK_{i, i}^{(N)} \in [\frac{1}{d!}, \frac{1}{d(d-1)}]$ on the variances allows us to convert the moment approximation in Theorem \ref{thm:concentrationofmixedmoments} into the Kolmogorov-Smirnov distance $d_{\mathrm{KS}}$. This follows from a Taylor series expansion of the Fourier transforms in the Berry-Esseen smoothing inequality \cite[Chapter XVI.3, Lemma 2]{Fel71}. Consequently, for any $M, \varepsilon > 0$, there is a constant $C=C(d, M, \varepsilon)$ independent of $N$ and the choice of contracting vectors $(u_N^{(j)})_{j \in [d-2]} \subset \sphr^{N-1}$ such that
\[
    \pr \left[ d_{\mathrm{KS}}(\mu_N, \mu_{\mathrm{sc}}^{\bK^{(N)}}) > \varepsilon \right] \leq CN^{-M},
\]
where $\mu_N$ is the empirical spectral distribution of $\frac{1}{\sqrt{N}}\ten[u_N^{(1)}\otimes \cdots \otimes u_N^{(d-2)}]$ and $\mu_{\mathrm{sc}}^{\bK^{(N)}}$ is the semicircle distribution of variance 
\[
    \bK^{(N)} = \frac{1}{d(d-1)}\norm{u_N^{(1)} \odot \cdots \odot u_N^{(d-2)}}_2^2.
\]
\end{remark}

\begin{figure}
\makebox[406.0pt][c]{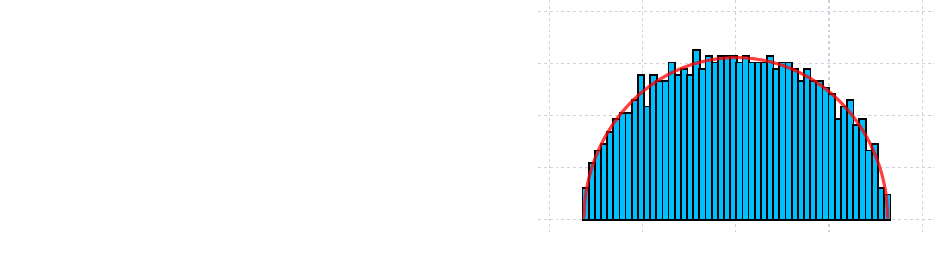}
\caption{Realizations of the empirical spectral distribution of the contracted tensor ensemble in the extreme cases of the variance covered in Example \ref{eg:extreme_cases}. We start with a single realization of a Wigner tensor $\mbf{T}_{4, 1000}$ whose upper triangular entries $k_1 \leq k_2 \leq k_3 \leq k_4$ are Rademacher distributed if $k_1$ is odd and normally distributed if $k_1$ is even. We use this to construct two contracted tensors $\mbf{A}_{1000} = \frac{1}{\sqrt{1000}}\mbf{T}_{4, 1000}[u \otimes u]$ and $\mbf{B}_{1000} = \frac{1}{\sqrt{1000}}\mbf{T}_{4, 1000}[v \otimes w]$, where $u$ is the vector in \eqref{eq:contraction_example}, $v(k) \equiv \frac{1}{\sqrt{1000}}$, and $w(k) = (-1)^{k+1}v(k)$. The eigenvalue histogram of $\mbf{A}_{1000}$ (resp., $\mbf{B}_{1000}$) is then plotted against the semicircle density of variance $\frac{1}{d(d-1)}\inn{u \odot u}{u \odot u} = \frac{1}{12}$ on the left (resp., $\frac{1}{d(d-1)}\inn{v \odot w}{v \odot w} = \frac{1}{24}$ on the right).}
\label{figure:combined_marginals}
\end{figure}

\begin{figure}
\makebox[406.0pt][c]{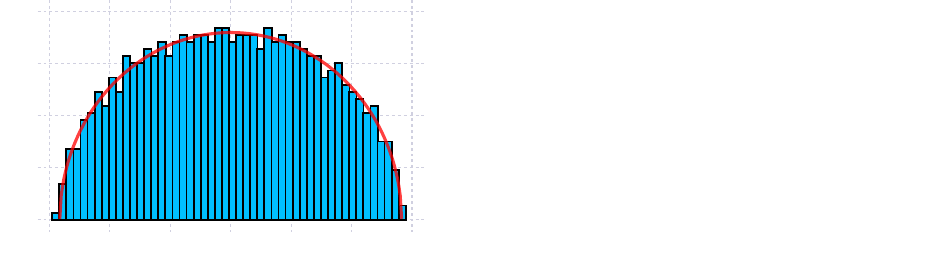}
\caption{Realizations of the joint behavior of the contracted tensor ensemble. Using the same matrices $\mbf{A}_{1000}$ and $\mbf{B}_{1000}$ from Figure \ref{figure:combined_marginals}, we construct the sum $\mbf{C}_{1000} = \mbf{A}_{1000} + \mbf{B}_{1000}$ and the anticommutator $\mbf{D}_{1000} = \mbf{A}_{1000}\mbf{B}_{1000} + \mbf{B}_{1000}\mbf{A}_{1000}$. The orthogonality $\inn{u \odot u}{v \odot w} = 0$ of the symmetrized contracting vectors implies that the approximating semicircular family in Theorem \ref{thm:concentrationofmixedmoments} is a \emph{free} semicircular family. The analytic machinery of free probability can then be used to deduce the approximations for $\mbf{C}_{1000}$ and $\mbf{D}_{1000}$. Accordingly, the eigenvalue histogram of $\mbf{C}_{1000}$ (resp., $\mbf{D}_{1000}$) is plotted against the semicircle density of variance $\frac{1}{12} + \frac{1}{24} = \frac{1}{8}$ on the left (resp., the symmetric Poisson distribution of variance $\frac{1}{144}$ \cite{NS98} on the right).}\label{figure:sum_anticom}
\end{figure}

The proof of Theorem \ref{thm:concentrationofmixedmoments} relies on the classical moment method in RMT; however, to accommodate the tensorial nature of our matrices, we augment the usual graphical framework for such calculations to include this information. Our diagrammatic approach is well-suited to the dependence structure that appears in our random matrix precisely because it allows us to access the independence in our random tensor. If one forgets the mechanism of this lineage and simply works with the matrix, then this property of the tensor is obscured. Our approach does come at the cost of additional combinatorial considerations, necessitating a bipartite representation, but we emphasize some additional advantages as well. At a high level, the graphs provide a parallel between contracted Wigner tensors and Wigner matrices, explaining the universality of the semicircle distribution for all orders $d$. After the initial investment in the single-matrix case, the multi-matrix case follows essentially from the Cauchy-Schwarz inequality. The surprisingly elegant form of the covariance in \eqref{eq:gram_matrix} can then be easily read off from the graphs. In contrast, the dependence of the covariance on the shape of the contracting vectors greatly complicates the usual analytic approach via the resolvent, even in the case of $d = 3$ \cite[Appendix A]{dCC21}. Finally, we expect that our framework can be applied to other structured tensor calculations. We present this framework and review the relevant aspects from free probability in Section \ref{sec:background}. The proofs of the main results are given in Section \ref{sec:proof_of_theorem} with the proofs of the technical results deferred to Section \ref{sec:technical_results}.

\subsubsection*{Notation}\label{sec:notation} For convenience, we adopt the following convention for big $O$ notation. Asymptotics will always be in the large $N$ limit. If the implicit constant depends on some parameters, then we indicate this with subscripts. For example, the asymptotic $O_d(N^{-1})$ in the abstract indicates a dependence of the implicit constant on the order $d$ of the tensor.

\subsection{Extensions}\label{sec:extensions}
For the sake of brevity, we restrict our proofs to Wigner tensors as defined in Definition \ref{defn:wigner_tensor} and contractions by pure tensors. Here, we collect a number of straightforward extensions.

\subsubsection{General variance profiles}\label{sec:general_variance_profile}
Instead of the GOTE variance profile in \ref{cond:gote_variance}, suppose that the entries with at least three distinct indices have variance $\sigma_\lambda^2$, where $\lambda \vdash d$ is the integer partition of $d$ determined by $(b_l)_{l \in [N]}$. To keep track of how the dependence of our tensor entries propagates through repeated contractions, we use a combinatorial object known as a uniform block permutation \cite{OEIS}.

\begin{definition}[Uniform block permutation]\label{defn:uniform_block_permutation}
Consider two disjoint copies $[n]_L$ and $[n]_R$ of the set $[n] = \{1, \ldots, n\}$. We think of $[n]_L$ and $[n]_R$ as a left side and a right side respectively. A \emph{uniform block permutation} $\pi$ of $[n]$ is a partition of the set $[n]_L \sqcup [n]_R$ such that each block $B = B_L \sqcup B_R \in \pi$ contains the same number of elements from each side $\#(B_L) = \#(B_R)$, where $B_L = B \cap [n]_L$ and $B_R = B \cap [n]_R$. We write $\mathcal{UBP}(n)$ for the set of uniform block permutations of $[n]$.
\end{definition}

For a uniform block permutation $\pi = \{B_1, \ldots, B_{\#(\pi)}\} \in \mcal{UBP}(d-2)$, we define the partition
\[
    \lambda_\pi := \left(\frac{\#(B_1)}{2}, \ldots, \frac{\#(B_{\#(\pi)})}{2}, 1, 1\right) \vdash d,
\]
where $\#(B_1) \geq \cdots \geq \#(B_{\#(\pi)})$. Theorems \ref{thm:contracted_tensor_ensemble} and \ref{thm:concentrationofmixedmoments} still hold for a general variance profile $(\sigma_\lambda^2)_{\lambda \vdash d}$, but with a less tractable formula for the covariance\small
\begin{equation}\label{eq:general_covariance}
    \calK_{i, i'}^{(N)} = \sum_{\pi \in \mcal{UBP}(d-2)} \sigma_{\lambda_\pi}^2 \left(\sum_{\phi: \pi \hookrightarrow [N]} \prod_{B \in \pi} \prod_{l \in B_L} u_N^{(i, l)}(\phi(B)) \prod_{r \in B_R} u_N^{(i', r)}(\phi(B))\right).
\end{equation}\normalsize

\begin{example}[A constant variance profile]\label{eg:constant_variance_profile}
Even in the case of the constant variance profile $\sigma_\lambda^2 \equiv 1$, the covariance in \eqref{eq:general_covariance} can be difficult to compute. If $d = 3$, then the covariance amounts to a single inner product
\[
  \calK_{i, i'}^{(N)} = \inn{u_N^{(i)}}{u_N^{(i')}}.
\]
In particular, the marginals of the limiting family $(s_i)_{i \in I}$ are always standard semicircular $\calK_{i, i} = 1$ and asymptotic orthogonality is still equivalent to asymptotic freeness. This is no longer the case for $d \geq 4$ even if we restrict to common contracting vectors. For example, if $d = 4$, the covariance can be written as
\[
  \calK_{i, i'}^{(N)} = 2\inn{u_N^{(i)}}{u_N^{(i')}}^2 - \inn{(u_N^{(i)})^{\circ 2}}{(u_N^{(i')})^{\circ 2}},
\]
where $\cdot^{\circ p}$ denotes the $p$-th Hadamard product power.

In special cases, the covariance simplifies further. Of particular interest are when the contracting vectors are localized versus delocalized. A straightforward application of \eqref{eq:general_covariance} for $\sigma_\lambda^2 \equiv 1$ yields the following behavior (cf.\ Example \ref{eg:common_contracting_vector}):
\begin{align*}
  \norm{u_N^{(i)}}_\infty \to 1 &\implies \calK_{i, i'} = \lim_{N \to \infty} \inn{(u_N^{(i)})^{\circ d-2}}{(u_N^{(i')})^{\circ d-2}}; \\
  \norm{u_N^{(i)}}_\infty \to 0 &\implies \calK_{i, i'} = (d-2)!\left(\lim_{N \to \infty} \inn{u_N^{(i)}}{u_N^{(i')}}\right)^{d-2}; \\
  \text{supp}(u_N^{(i)}) \cap \text{supp}(u_N^{(i')}) = \emptyset &\implies \calK_{i, i'}^{(N)} = 0.
\end{align*}
Note that the first two cases do not make any assumptions on the shape of the contracting vector $u_N^{(i')}$ used to construct the covarying matrix $\frac{1}{\sqrt{N}}\mbf{T}_{d, N}[(u_N^{(i')})^{\otimes d-2}]$.

The variance profile $\sigma_\lambda^2 \equiv 1$ does admit an alternative formulation for the covariances $(\calK_{i, i'}^{(N)})_{i, i' \in I}$ that can be more tractable computationally. In particular, by considering $(\mcal{UBP}(d-2), \leq)$ as a poset for the reversed refinement order $\leq$, we can use the corresponding M\"{o}bius function $\mob_{\mcal{UBP}}$ to rewrite \eqref{eq:general_covariance} as\small
\[
    \mcal{K}_{i, i'}^{(N)} = \sum_{\rho \in \mcal{UBP}(d-2)} \Bigg(\sum_{\substack{\pi \in \mcal{UBP}(d-2) \\\text{s.t. } \rho \leq \pi}} \mob_{\mcal{UBP}}(\rho, \pi)\Bigg) \prod_{B \in \rho} \inn{\circ_{l \in B_L} u_N^{(i, l)}}{\circ_{r \in B_R} u_N^{(i', r)}}.
\]\normalsize
\end{example}

\subsubsection{General contractions}\label{sec:general_tensor_contractions}
We can also consider contractions by general unit tensors $(\mbf{U}_{d-2, N}^{(i)})_{i \in I} \subset \sphr^{N^{d-2} - 1}$. For the GOTE variance profile, the new formula for the covariance follows essentially from linearity:
\[
    \mcal{K}_{i ,i'}^{(N)} = \frac{1}{d(d-1)}\inn{\sym(\mbf{U}_{d-2, N}^{(i)})}{\sym(\mbf{U}_{d-2, N}^{(i')})}.
\]
One needs to take care since the contracting tensor $\mbf{U}_{d-2, N}^{(i)}$ may require an increasing number of basis elements to write as $N \to \infty$, which is why the result does not follow directly from linearity. The modification of the pictures in Section \ref{sec:graphs_of_tensors} and the corresponding calculations is straightforward but tedious. As before, the upper bound $\calK_{i, i}^{(N)} \leq \frac{1}{d(d-1)}$ for the variance is achieved iff $\mbf{U}_{d-2, N}^{(i)}$ is symmetric. On the other hand, the lower bound from Example \ref{eg:extreme_cases} no longer holds. Indeed, a natural candidate for maximally asymmetric in this context would be an antisymmetric tensor $\mbf{U}_{d-2, N}^{(i)}$, for which $\sym(\mbf{U}_{d-2, N}^{(i)}) = \mbf{0}_{d-2, N}$ is the zero tensor and $\ten[\mbf{U}_{d-2, N}^{(i)}] \equiv \mbf{0}_{2, N}$ is the zero matrix.

\subsubsection{Independent Wigner tensors}\label{sec:independent_tensors}
A key aspect of the contracted tensor ensemble is the singular source of randomness coming from the ambient tensor $\mbf{T}_{d, N}$. One can also consider contractions $(\mbf{U}_{d-2, N}^{(i, j)})_{i \in I, j \in J} \subset \sphr^{N^{d-2} - 1}$ of independent Wigner tensors $(\mbf{T}_{d, N}^{(j)})_{j \in J}$, in which case one obtains the expected answer based on the well-established principle of free independence emerging in the large $N$ limit of suitably generic independent random matrices. The augmented covariance matrix $(\calK_{i, i', j, j'}^{(N)})_{i, i' \in I, j, j' \in J}$ now takes the form
\[
    \calK_{i, i', j, j'}^{(N)} = \frac{\indc{j = j'}}{d(d-1)}\inn{\sym(\mbf{U}_{d-2, N}^{(i)})}{\sym(\mbf{U}_{d-2, N}^{(i')})}.
\]

\subsection{Comparison with existing results}\label{sec:existing_results}
Of course, one can simply view a contracted Wigner tensor as a generalization of a Wigner matrix where one allows for a dependence structure between the entries. Results in this direction naturally depend on the particulars of the dependence structure. To the best of our knowledge, nothing in the existing literature covers the case of a contracted Wigner tensor, even in the single-matrix model. For concreteness, consider a Wigner tensor $\mbf{T}_{4, N}$ with suitably scaled Rademacher entries under the common contracting vector
\begin{equation}\label{eq:contraction_example}
  u_N = \left(0, 0, \frac{1}{\sqrt{3}}, \frac{1}{\sqrt{3}}, \frac{1}{\sqrt{3(N-4)}}, \ldots, \frac{1}{\sqrt{3(N-4)}} \right).
\end{equation}
If one partitions $\mbf{T}_{4, N}[u_N^{\otimes 2}]$ so that entries across different blocks are independent, one necessarily obtains a block $B$ of size $\#(B) \sim N^2$, which rules out the partitioning scheme in \cite{SSB05}. Moreover, knowing that the entries other than the $(3, 3)$-th entry of $\mbf{T}_{4, N}[u_N^{\otimes 2}]$ have each attained their maximal value forces the remaining entry $\mbf{T}_{4, N}[u_N^{\otimes 2}](3, 3)$ to attain its maximal value as well, which rules out the conditional centeredness condition in \cite{GNT15}. Next, a straightforward calculation shows that
\begin{align*}
  \cov\big(\mbf{T}_{4, N}[u_N^{\otimes 2}](1, 1), \mbf{T}_{4, N}[u_N^{\otimes 2}](2, 1)\big) &= 0; \\
  \cov\big(\mbf{T}_{4, N}[u_N^{\otimes 2}](3, 3), \mbf{T}_{4, N}[u_N^{\otimes 2}](4, 3)\big) &= \frac{7}{54}, 
\end{align*}
which rules out the functional representation in \cite{BMP15} and the approximate uncorrelation scheme in \cite{HKW16}. Finally, a similar calculation shows that
\[
  \lim_{N \to \infty} \frac{1}{N} \E\left[\sum_{k \in [N]} \mbf{T}_{4, N}[u_N^{\otimes 2}](3, k)\mbf{T}_{4, N}[u_N^{\otimes 2}](4, k) \right] = \frac{1}{18},
\]
which rules out the isotropy condition in \cite{BBvH21}.

\section{Background}\label{sec:background}

\subsection{Graphs of tensors}\label{sec:graphs_of_tensors} 
Diagrammatic representations for tensors date back to the work of Penrose \cite{Pen71}. Roughly speaking, in Penrose graphical notation, tensors are represented by shapes such as boxes, circles, and triangles; the corresponding indices are notated by lines emanating from the shapes. If a line connects two shapes, then the corresponding index is summed over in both tensors.

In contrast, the RMT convention represents matrices by edges and indices by vertices. One can then keep track of repetitions in the indices by identifying vertices. This bookkeeping allows one to identify leading order terms when computing trace expansions. Such considerations form the basis of a noncommutative probability theory known as \emph{traffic probability} \cite{Mal20,CDM16,ACDGM21}. Of course, one can simply view the contracted tensor $\mbf{T}_{d, N}[u_N^{(i, 1)} \otimes \cdots \otimes u_N^{(i, d-2)}]$ as a random matrix and represent it in the usual way; however, the graph then fails to keep track of the dependencies between the entries introduced by the contractions.

We circumvent this issue by amalgamating the two representations. In particular, we work with a bipartite graph with a class of vertices $V$ for the indices and a class of vertices $W$ for the tensors. It will be convenient to further separate the tensors $W = W_1 \sqcup W_2$ into the $d$-th order tensors $W_1$ and the contracting vectors $W_2$. In our diagrams, we use circles for the indices, diamonds for the tensors, and boxes for the contracting vectors. For example,
  \[
    \begin{tikzpicture}[baseline=(current  bounding  box.center)]
    \node at (5, .0625) {$= \, \mbf{T}_{4,N}(k_1, k_2, l_1, l_2)u_N^{(1)}(l_1)u_N^{(2)}(l_2)$};
    
    \node[draw, diamond, inner sep=2.5pt] (T1) at (0,0) {$\mbf{T}_{4, N}$};
    \node[draw, rectangle] (U1) at (-1.06066017178, 2.12132034356) {$u_N^{(1)}$};
    \node[draw, rectangle] (U2) at (1.06066017178, 2.12132034356) {$u_N^{(2)}$};
    
    \node[draw, circle, inner sep=2pt] (I1) at (-1.5, 0) {$k_1$};
    \node[draw, circle, inner sep=2pt] (I2) at (-1.06066017178, 1.06066017178) {$l_1$};
    \node[draw, circle, inner sep=2pt] (I3) at (1.06066017178, 1.06066017178) {$l_2$};
    \node[draw, circle, inner sep=2pt] (I4) at (1.5, 0) {$k_2$};

    \draw[-] (T1) to (I1);
    \draw[-] (T1) to (I2);
    \draw[-] (T1) to (I3);
    \draw[-] (T1) to (I4);
    \draw[-] (U1) to (I2);
    \draw[-] (U2) to (I3);
    \end{tikzpicture}
  \]
  The symmetry of our tensor $\mbf{T}_{d, N}$ ensures that this representation is well-defined. We adopt the convention that any circular vertex without a label indicates a summation over all possible indices. For example,
  \[
    \begin{tikzpicture}[baseline=(current  bounding  box.center)]
    \node at (6, -.155) {$ = \displaystyle \sum_{l_1 \in [N]} \sum_{l_2 \in [N]} \mbf{T}_{4,N}(k_1, k_2, l_1, l_2)u_N^{(1)}(l_1)u_N^{(2)}(l_2)$};
    
    \node at (4.375,-1.28) {$ = \mbf{T}_{4, N}[u_N^{(1)} \otimes u_N^{(2)}](k_1, k_2)$};
    
    \node[draw, diamond, inner sep=2.5pt] (T1) at (0,0) {$\mbf{T}_{4, N}$};
    \node[draw, rectangle] (U1) at (-1.06066017178, 2.12132034356) {$u_N^{(1)}$};
    \node[draw, rectangle] (U2) at (1.06066017178, 2.12132034356) {$u_N^{(2)}$};
    
    \node[draw, circle, inner sep=2pt] (I1) at (-1.5, 0) {$k_1$};
    \node[draw, circle, inner sep=2pt] (I2) at (-1.06066017178, 1.06066017178) {$\phantom{l_1}$};
    \node[draw, circle, inner sep=2pt] (I3) at (1.06066017178, 1.06066017178) {$\phantom{l_2}$};
    \node[draw, circle, inner sep=2pt] (I4) at (1.5, 0) {$k_2$};
    
    \draw[-] (T1) to (I1);
    \draw[-] (T1) to (I2);
    \draw[-] (T1) to (I3);
    \draw[-] (T1) to (I4);
    \draw[-] (U1) to (I2);
    \draw[-] (U2) to (I3);
    \end{tikzpicture}
  \]
  The (unnormalized) trace of a general $m$-fold product
  \[
    \trace\big(\mbf{T}_{d, N}[u_N^{(i_1, 1)} \otimes \cdots \otimes u_N^{(i_1, d-2)}] \cdots \mbf{T}_{d, N}[u_N^{(i_m, 1)} \otimes \cdots \otimes u_N^{(i_m, d-2)}]\big)
  \]
  can then be represented as a decorated cycle\small
  \begin{equation}\label{eq:graph_of_tensors}
    \begin{tikzpicture}[baseline=(current  bounding  box.center)]    
    \node[draw, diamond, inner sep=2.5pt] (T1) at (0,2.59807621135) {$\mbf{T}_{d, N}$};
    \node[draw, rectangle] (U_1^1) at (-1.06066017178, 4.71939655491) {$u_N^{(i_1, 1)}$};
    \node[draw, rectangle] (U_1^{d-2}) at (1.06066017178, 4.71939655491) {$u_N^{(i_1, d-2)}$};

    \node[draw, diamond, inner sep=2.5pt] (T2) at (-2.25,1.29903810568) {$\mbf{T}_{d, N}$};
    \node[draw, rectangle] (U_2^1) at (-5.12571651460, 1.73458893811) {$u_N^{(i_2, 1)}$};
    \node[draw, rectangle] (U_2^{d-2}) at (-4.06505634282, 3.57170624519) {$u_N^{(i_2, d-2)}$};
        
    \node[draw, diamond, inner sep=2.5pt] (Tm) at (2.25,1.29903810568) {$\mbf{T}_{d, N}$};
    \node[draw, rectangle] (U_m^1) at (4.06505634282, 3.57170624519) {$u_N^{(i_m, 1)}$};
    \node[draw, rectangle] (U_m^{d-2}) at (5.12571651460, 1.73458893811) {$u_N^{(i_m, d-2)}$};
        
    \node[draw, diamond, inner sep=2.5pt] (T3) at (-2.25,-1.29903810568) {$\mbf{T}_{d, N}$};
    \node[draw, rectangle] (U_3^{d-2}) at (-5.12571651460, -1.73458893811) {$u_N^{(i_3, d-2)}$};
    \node[draw, rectangle] (U_3^1) at (-4.06505634282, -3.57170624519) {$u_N^{(i_3, 1)}$};
    
    \node[draw, diamond, inner sep=2.5pt] (T_{m-1}) at (2.25,-1.29903810568) {$\mbf{T}_{d, N}$};
    \node[draw, rectangle] (U_{m-1}^1) at (5.12571651460, -1.73458893811) {$u_N^{(i_{m-1}, 1)}$};
    \node[draw, rectangle] (U_{m-1}^{d-2}) at (4.06505634282, -3.57170624519) {$u_N^{(i_{m-1}, d-2)}$};
    
    \node[draw, circle, inner sep=2pt] (I_1^1) at (-1.5, 2.59807621135) {$\phantom{k}$};
    \node[draw, circle, inner sep=2pt] (I_1^2) at (-1.06066017178, 3.65873638313) {$\phantom{k}$};
    \node[draw, circle, inner sep=2pt] (I_1^3) at (1.06066017178, 3.65873638313) {$\phantom{k}$};
    \node[draw, circle, inner sep=2pt] (I_1^4) at (1.5, 2.59807621135) {$\phantom{k}$};
    \node at (0, 3.65873638313) {$\cdots$};

    \node[draw, circle, inner sep=2pt] (I_2^1) at (-3, 0) {$\phantom{k}$};
    \node[draw, circle, inner sep=2pt] (I_2^2) at (-3.69888873943, 0.9108095380) {$\phantom{k}$};
    \node[draw, circle, inner sep=2pt] (I_2^3) at (-2.63822856765, 2.74792684511) {$\phantom{k}$};
    \node at (-3.16855865354, 1.829368191566) {\rotatebox[origin=c]{60}{$\cdots$}};
        
    \node[draw, circle, inner sep=2pt] (I_m^1) at (3, 0) {$\phantom{k}$};
    \node[draw, circle, inner sep=2pt] (I_m^2) at (2.63822856765, 2.74792684511) {$\phantom{k}$};
    \node[draw, circle, inner sep=2pt] (I_m^3) at (3.69888873943, 0.91080953802) {$\phantom{k}$};
    \node at (3.16855865354, 1.829368191566) {\rotatebox[origin=c]{-60}{$\cdots$}};

    \node[draw, circle, inner sep=2pt] (I_3^1) at (-1.5, -2.59807621135) {$\phantom{k}$};
    \node[draw, circle, inner sep=2pt] (I_3^2) at (-2.63822856765, -2.74792684511) {$\phantom{k}$};
    \node[draw, circle, inner sep=2pt] (I_3^3) at (-3.69888873943, -0.9108095380) {$\phantom{k}$};
    \node at (-3.16855865354, -1.829368191566) {\rotatebox[origin=c]{-60}{$\cdots$}};
    
    \node[draw, circle, inner sep=2pt] (I_{m-1}^1) at (1.5, -2.59807621135) {$\phantom{k}$};
    \node[draw, circle, inner sep=2pt] (I_{m-1}^2) at (3.69888873943, -0.91080953802) {$\phantom{k}$};
    \node[draw, circle, inner sep=2pt] (I_{m-1}^3) at (2.63822856765, -2.74792684511) {$\phantom{k}$};
    \node at (3.16855865354, -1.829368191566) {\rotatebox[origin=c]{60}{$\cdots$}};
    
    \node at (0, -2.59807621135) {$\cdots$};
    
    \draw[-] (T1) to (I_1^1);
    \draw[-] (T1) to (I_1^2);
    \draw[-] (T1) to (I_1^3);
    \draw[-] (T1) to (I_1^4);
    
    \draw[-] (U_1^1) to (I_1^2);
    \draw[-] (U_1^{d-2}) to (I_1^3);
    
    \draw[-] (T2) to (I_1^1);
    \draw[-] (T2) to (I_2^1);
    \draw[-] (T2) to (I_2^2);
    \draw[-] (T2) to (I_2^3);

    \draw[-] (U_2^1) to (I_2^2);
    \draw[-] (U_2^{d-2}) to (I_2^3);

    \draw[-] (T3) to (I_2^1);
    \draw[-] (T3) to (I_3^1);
    \draw[-] (T3) to (I_3^2);
    \draw[-] (T3) to (I_3^3);

    \draw[-] (U_3^1) to (I_3^2);
    \draw[-] (U_3^{d-2}) to (I_3^3);
     
    \draw[-] (T_{m-1}) to (I_m^1);
    \draw[-] (T_{m-1}) to (I_{m-1}^1);
    \draw[-] (T_{m-1}) to (I_{m-1}^2);
    \draw[-] (T_{m-1}) to (I_{m-1}^3);

    \draw[-] (U_{m-1}^1) to (I_{m-1}^2);
    \draw[-] (U_{m-1}^{d-2}) to (I_{m-1}^3);
    
    \draw[-] (Tm) to (I_1^4);
    \draw[-] (Tm) to (I_m^1);
    \draw[-] (Tm) to (I_m^2);
    \draw[-] (Tm) to (I_m^3);

    \draw[-] (U_m^1) to (I_m^2);
    \draw[-] (U_m^{d-2}) to (I_m^3);

    \draw[-] (I_3^1) to (-.625, -2.59807621135);
    \draw[-] (I_{m-1}^1) to (.625, -2.59807621135);
    \end{tikzpicture}
  \end{equation}\normalsize

  Let $G = (V, W, E)$ be the graph above, where for convenience we suppress the dependence on $m$. Recall that $V$ is the set of (circular) index vertices and $W$ is the set of (polygonal) tensor vertices. We use $\ssim$ to denote adjacency in $G$. The expected normalized trace associated to the graph $G$ can then be written as
  \[
    \tau[G] := \frac{1}{N} \sum_{\phi: V \to [N]} \E\bigg[\prod_{w_1 \in W_1} \mbf{T}_{d, N}(w_1 | \phi)\bigg] \prod_{w_2 \in W_2} u_N(w_2 | \phi),
  \]
  where
  \begin{align*}
    \mbf{T}_{d, N}(w_1 | \phi) &:= \mbf{T}_{d, N}\Big(\bigtimes_{\substack{v \in V: \\ v\ssim w_1}} \phi(v)\Big); \\
    u_N(w_2 | \phi) &:=  u_N^{(w_2)}(\phi(v^{w_2})),
  \end{align*}
  and $v^{w_2}$ denotes the unique vertex in $V$ adjacent to $w_2.$ In other words, the notation $(w | \phi)$ is shorthand for the $\phi$-labels of the neighbors of $w$. By a slight abuse of notation, we assume that the superscript in $u_N^{(w_2)}$ identifies the contracting unit vector $u_N^{(i, j)}$.
  
  Similarly, we define
  \begin{equation}\label{eq:injective_traffic_state}
    \tau^0[G] := \frac{1}{N} \sum_{\phi: V \hookrightarrow [N]} \E\bigg[\prod_{w_1 \in W_1} \mbf{T}_{d, N}(w_1 | \phi)\bigg] \prod_{w_2 \in W_2} u_N(w_2 | \phi),
  \end{equation}
  where $\phi: V \hookrightarrow [N]$ denotes an injective map. Note that the functions $\tau$ and $\tau^0$ can be extended to arbitrary graphs of tensors in the obvious way. For any such graph, which we again denote by $G = (V, W, E)$, we write $\mcal{P}(V)$ for the set of partitions of the index vertices $V$. For a partition $\pi \in \mcal{P}(V)$, we define the \emph{quotient graph} $G^\pi$ as the graph obtained from $G$ by identifying vertices in the same block in $\pi$. Formally, $G^\pi = (V^\pi, W, E)$, where
  \[
    V^\pi := V/\ssim_\pi = \{B : B \in \pi\} = \{[v]_\pi : v \in V\}.
  \]
  This allows us to formulate the relation
  \begin{equation}\label{eq:traffic_mobius_relation}
    \tau[G] = \sum_{\pi \in \mcal{P}(V)} \tau^0[G^\pi].
  \end{equation}
  In the context of matrices, the function $\tau$ (resp., $\tau^0$) is known as the \emph{traffic state} (resp., \emph{injective traffic state}) \cite{Mal20}.

  To use equation \eqref{eq:traffic_mobius_relation}, we need to calculate $\tau^0$ on all possible quotients of $G$. The advantage of this comes from the fact that the injective labelings $\phi: V^\pi \hookrightarrow [N]$ will allow us to read off the dependence structure of the tensor entries directly from the graph $G^\pi$. To recognize the corresponding limit object, we use a characterization of semicircular families implicit in \cite{CDM16} (see also \cite[Section 3]{AM20}). We review this and other free probabilistic notions in the next section.

\subsection{Free probability}\label{sec:free_probability}

We content ourselves with a narrow review of the free probability machinery. The interested reader should consult \cite{VDN92,NS06,MS17} for further details.

\begin{definition}[Noncommutative probability]\label{defn:nc_probability}
  A \emph{noncommutative probability space (ncps)} is a unital algebra $\mcal{A}$ over $\C$ paired with a unital linear functional $\varphi: \mcal{A} \to \C$. For a family of random variables $\mbf{a} = (a_i)_{i \in I} \subset \mcal{A}$, their \emph{(joint) distribution} is defined as the linear functional
  \[
    \mu_{\mbf{a}}: \C\langle\mbf{x}\rangle \to \C, \qquad P \mapsto \varphi\big(P(\mbf{a})\big),
  \]
  where $\C\langle\mbf{x}\rangle$ is the free algebra over the indeterminates $\mbf{x} = (x_i)_{i \in I}$ and $P(\mbf{a}) \in \mcal{A}$ is the usual evaluation of noncommutative polynomials. We say that a sequence of families $\mbf{a}_N = (a_N^{(i)})_{i \in I} \subset (\mcal{A}_N, \varphi_N)$ \emph{converges in distribution} if the corresponding sequence of linear functionals $(\mu_{\mbf{a}_N})_{N \in \N}$ converges pointwise. Note that the limit defines a new ncps $(\C\langle\mbf{x}\rangle, \lim_{N \to \infty} \mu_{\mbf{a}_N})$.
\end{definition}

In other words, the distribution in this setting is the information of the noncommutative moments. Just as classical moments have a parameterization in terms of cumulants, noncommutative moments have a parameterization in terms of \emph{free cumulants}.

\begin{definition}[Free independence]\label{defn:free_independence}
  Let $\mcal{NC}(n)$ denote the set of noncrossing partitions of $[n]$. We write $\leq$ for the reversed refinement order on $\mcal{NC}(n)$ and $\mob_{\mcal{NC}}$ for the corresponding M\"{o}bius function. Note that the minimal element $0_n$ in $(\mcal{NC}(n), \leq)$ consists of singletons, whereas the maximal element $1_n$ is itself a singleton.

  A noncrossing partition $\pi \in \mcal{NC}(n)$ defines a multilinear functional
\[
  \varphi_\pi : \mcal{A}^n \to \C, \qquad (a_1, \ldots, a_n) \mapsto \prod_{B \in \pi} \varphi(B)[a_1, \ldots, a_n],
\] 
where a block $B = (i_1 < \cdots < i_m) \in \pi$ splits the $n$-tuple $(a_1, \ldots, a_n)$ into a lower order moment
\[
  \varphi(B)[a_1, \ldots, a_n] := \varphi(a_{i_1} \cdots a_{i_m}).
\]
The \emph{free cumulant} $\kappa_\pi : \mcal{A}^n \to \C$ is obtained from the M\"{o}bius convolution
\[
  \kappa_\pi[a_1, \ldots, a_n] := \sum_{\substack{\sigma \in \mcal{NC}(n) \\ \text{s.t. } \sigma \leq \pi}} \varphi_\sigma[a_1, \ldots, a_n] \mob_{\mcal{NC}}(\sigma, \pi).
\]
By the M\"{o}bius inversion formula, this is equivalent to
\begin{equation}\label{eq:free_cumulants_sum}
  \varphi_\pi[a_1, \ldots, a_n] = \sum_{\substack{\sigma \in \mcal{NC}(n) \\ \text{s.t. } \sigma \leq \pi}} \kappa_\sigma[a_1, \ldots, a_n].
\end{equation}
The free cumulants inherit both multilinearity and multiplicativity with respect to the blocks. In particular, 
\[
  \kappa_\pi[a_1, \ldots, a_n] = \prod_{B \in \pi} \kappa(B)[a_1, \ldots, a_n],
\] 
where $B = (i_1 < \cdots < i_m)$ is a block as before and
\[
  \kappa(B)[a_1, \ldots, a_n] := \kappa_{1_m}[a_{i_1}, \ldots, a_{i_m}].
\]
For notational convenience, we write $\kappa_m = \kappa_{1_m}$.

Subsets $(\mcal{S}_i)_{i \in I}$ of a ncps $(\mcal{A}, \varphi)$ are \emph{freely independent} if mixed free cumulants on the $(\mcal{S}_i)_{i \in I}$ vanish: for any $a_1, \ldots, a_n$ such that $a_j \in \mcal{S}_{i(j)}$,
\[
\exists i(j) \neq i(k) \implies \kappa_n[a_1, \ldots, a_n] = 0.
\]
A sequence of families $\mbf{a}_N \subset (\mcal{A}_N, \varphi_N)$ is \emph{asymptotically free} if it converges in distribution to a limit $\mbf{x}$ that is free in $(\C\langle\mbf{x}\rangle, \lim_{N \to \infty} \mu_{\mbf{a}_N})$.
\end{definition}

The noncommutative analogue of the multivariate normal distribution then follows from the noncrossing analogue of Wick's formula.

\begin{definition}[Semicircular family]\label{defn:semicircular_family}
  Let $(\mbf{K}_{i, i'})_{i, i' \in I}$ be a real positive semidefinite matrix. A \emph{(centered) semicircular family of covariance $\mbf{K}$} is a family of random variables $\mbf{s} = (s_i)_{i \in I}$ in a ncps $(\mcal{A}, \varphi)$ such that
\[
  \kappa_2(s_i, s_{i'}) = \mbf{K}_{i, i'}, \qquad \forall i, i' \in I,
\]
with all other cumulants on $\mbf{s}$ vanishing. By the moment-cumulant relation \eqref{eq:free_cumulants_sum}, this is equivalent to
\begin{equation}\label{eq:semicircular_family}
  \varphi(s_{i(1)} \cdots s_{i(n)}) = \sum_{\pi \in \mcal{NC}_2(n)} \prod_{\{j, k\} \in \pi} \mbf{K}_{i(j), i(k)},
\end{equation}
where $\mcal{NC}_2(n) \subset \mcal{NC}(n)$ is the subset of noncrossing \emph{pair} partitions of $[n]$. In particular, the $(s_i)_{i \in I}$ are free iff $(\mbf{K}_{i, i'})_{i, i' \in I}$ is diagonal, in which case we call $(s_i)_{i \in I}$ a \emph{free semicircular family}. If $\mbf{K}_{i, i'} = \indc{i = i'}$, then we call $(s_i)_{i \in I}$ a \emph{standard semicircular family}.

\end{definition}

\begin{example}\label{eg:rmt}
The algebra $L^{\infty-}(\Omega, \mcal{F}, \pr) \otimes \op{Mat}_N(\C)$ of random $N\times N$ matrices whose entries have finite moments of all orders equipped with the expected trace $\frac{1}{N}\E[\trace(\cdot)]$ defines a ncps. A well-known result of Dykema \cite[Theorem 2.1]{Dyk93}, building on earlier work of Voiculescu \cite{Voi91}, proves that independent Wigner matrices $(\mbf{W}_N^{(i)})_{i \in I} \subset (L^{\infty-}(\Omega, \mcal{F}, \pr) \otimes \op{Mat}_N(\C), \frac{1}{N}\E[\trace(\cdot)])$ converge in distribution to a standard semicircular family.
\end{example}

For random matrices $(\mbf{A}_N^{(i)})_{i \in I}$, one can also define convergence in distribution almost surely.

\begin{definition}\label{defn:as_convergence}
We say that $(\mbf{A}_N^{(i)})_{i \in I}$ \emph{converges in distribution almost surely} if for any finite subset $I_0 \subset I$, the sequence of random functionals
\[
  \mu_{(\mbf{A}_N^{(i)})_{i \in I_0}}: \C\langle\mbf{x}\rangle \times (\Omega, \mcal{F}, \pr) \to \C, \qquad P \mapsto \frac{1}{N}\mathrm{Tr}[P(\mbf{A}_N^{(i)} : i \in I_0)]
\]
converges pointwise almost surely.
\end{definition}

In special cases, one can read off the free cumulants from the injective traffic state. To see this, we recall the notion of a \emph{cactus graph}, a connected graph in which every edge belongs to a unique simple cycle. If the only quotients of a cycle that contribute in the injective traffic state are cactus graphs and the contributions are multiplicative with respect to the cycles of the cactus, then one can use a bijection between cactus graph quotients of a cycle of length $n$ and noncrossing partitions of $[n]$ to read off the free cumulants from the cycles of the cactus \cite[Section 5]{CDM16} (see also \cite[Section 3]{AM20}). A semicircular family can then be identified by the property that only cactus graphs with cycles of length two contribute (so-called \emph{double trees} \cite{Mal20}). We review this characterization in the next section. The picture for our contracted tensor ensemble is complicated by the additional vertices coming from the contracting vectors. Nevertheless, we show how the double tree criteria can still be adapted to this situation in Section \ref{sec:convergence_in_expectation}.

\subsection{The double tree criteria for \texorpdfstring{$d=2$}{d equals two}}\label{sec:double_trees}

We briefly review the $d = 2$ case to identify key properties that are sufficient to ensure convergence in distribution to a semicircular family. The techniques will serve as a guide for higher order $d$, where more sophisticated arguments are required to deal with the technical complications that arise when considering general tensors.

We start by noting that our picture for the trace of a general $m$-fold product in \eqref{eq:graph_of_tensors} applies equally well to the $d = 2$ case of random matrices $(\mbf{W}_N^{(i)})_{i \in I}$. Here, we no longer have the outer vertices decorating the cycle since there are $d - 2 = 0$ contractions. We can then simplify our graph $G$ by discarding the diamonds, which were only necessary to keep track of the additional indices in our $d$-th order tensor, and represent our $2$-tensor with a single labeled edge:
\[
    \begin{tikzpicture}[baseline=(current  bounding  box.center)]
    
    \node at (3, 0) {$=$};
    
    \node[draw, diamond, inner sep=1pt] (M1) at (0,0) {$\mbf{W}_{N}^{(i)}$};
    
    \node[draw, circle, inner sep=2pt] (I1) at (-1.5, 0) {$\phantom{k}$};
    \node[draw, circle, inner sep=2pt] (I2) at (1.5, 0) {$\phantom{k}$};
    
    \node[draw, circle, inner sep=2pt] (I3) at (4.5, 0) {$\phantom{k}$};
    \node[draw, circle, inner sep=2pt] (I4) at (7.5, 0) {$\phantom{k}$};

    \draw[-] (M1) to (I1);
    \draw[-] (M1) to (I2);
    \draw[-] (I3) to node[midway, above] {$i$} (I4);
    \end{tikzpicture}
\]
Let $H = (\msr{V}(H), \msr{E}(H))$ be the resulting graph: by construction, $H$ is a simple $m$-cycle in which each edge represents a matrix in $(\mbf{W}_N^{(i)})_{i\in I}$. In fact, the picture above shows how we can identify $(\msr{V}(H), \msr{E}(H)) = (V, W)$ with the vertices of our original graph $G = (V, W, E)$. We use $H$ to formulate two convenient properties that together guarantee convergence to a semicircular family.

\begin{proposition}\label{prop:casedequalto2}
Let $(\mbf{W}_N^{(i)})_{i \in I}$ be a family of random real symmetric matrices such that
\begin{enumerate}[label=(P\arabic*)]
    \item \label{cond:double_tree} (Supported on double trees)
    \[
      \lim_{N\to \infty} \tau^0[G^\pi]=0
    \]
    whenever $H^\pi$ is \emph{not} a double tree; 
    
    \item \label{cond:multiplicativity} (Multiplicativity) there is a real positive semidefinite matrix $(\mbf{K}_{i, i'})_{i, i' \in I}$ such that
    \[
      \lim_{N\to \infty} \tau^0[G^\pi] = \prod_{\{i, i'\}} \mbf{K}_{i, i'}
    \]
    whenever $H^\pi$ is a double tree, where the product runs over the double edges $\{i, i'\}$ in $H^\pi$.
\end{enumerate}
Then $(\mbf{W}_N^{(i)})_{i \in I}$ converges in distribution to a semicircular family $(s_i)_{i\in I}$ of covariance $\mbf{K}$.  
\end{proposition}

The proposition follows from a one-to-one correspondence between noncrossing partitions and cactus graph quotients of simple cycles \cite[Lemma 5.2]{CDM16} (see also \cite[Proposition 2.4]{AM20}). For the convenience of the reader, we outline this correspondence in the special case of double trees. We start by enumerating the vertices $\{v_1, \dots, v_m\}$ of the $m$-cycle $H$ in counterclockwise order. Similarly, we enumerate the edges $\{e_1, \ldots, e_m\}$ so that $v_i \stackrel{e_i}{\sim} v_{i+1}$
with the convention $v_{m+1}=v_1$. The map $[n]\ni i\mapsto v_i \in \msr{V}(H)$ defines a one-to-one correspondence between $\mathcal{NC}(n)$ and $\mathcal{NC}(\msr{V}(H))$. Similarly, for $[\bar{n}] = \{\bar{1}<\cdots < \bar{n}\}$, the map $[\bar{n}]\ni \bar{i} \mapsto e_i \in \msr{E}(H)$ defines a one-to-one correspondence between  $\mathcal{NC}(\bar{n})$ and  $\mathcal{NC}(\msr{E}(H))$. By considering the interlacing
\[
  1< \bar{1}< \cdots < n< \bar{n},
\]
we can transport the Kreweras complement \cite[Definition 9.21]{NS06}
\[
  \mathrm{Kr}: \mathcal{NC}(n)\to \mathcal{NC}(\bar{n})
\]
to a map 
\[
  \calK: \nc(\msr{V}(H))\to \nc(\msr{E}(H)).
\]
We will need the following lemma, which is a special case of \cite[Proposition 2.4]{AM20}. See Figure \ref{figure:double_tree} for an illustration.

\begin{lemma}[Double trees and noncrossing pairings]
\label{lem:kreweras}
The graph $H^\pi$ is a double tree iff $\pi\in \nc(\msr{V}(H))$ and $\calK(\pi)\in \nc_2(\msr{E}(H))$. Moreover, if $\calK(\pi)\in \nc_2(\msr{E}(H))$, then two edges are paired together iff their incident vertices are identified head-to-tail:
\[
  \{e_i, e_{i'}\} \in \mcal{K}(\pi) \iff [v_i]_\pi = [v_{i' + 1}]_\pi \quad \text{and} \quad [v_{i + 1}]_\pi = [v_{i'}]_\pi.
\]
\end{lemma}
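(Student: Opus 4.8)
The plan is to exhibit the claimed equivalence by analyzing what constraints ``$H^\pi$ is a double tree'' places on the quotient, and then matching these against the noncrossing-pairing condition through the Kreweras complement. Recall that $H$ is a simple $m$-cycle on vertices $v_1,\dots,v_m$ with edges $e_1,\dots,e_m$, and that a partition $\pi\in\mcal{P}(\msr{V}(H))$ produces a quotient multigraph $H^\pi$ on $\#(\pi)$ vertices with $m$ (multi-)edges. A \emph{double tree} is a connected multigraph in which every edge has multiplicity exactly $2$ and the underlying simple graph is a tree; equivalently, it has $\#(\pi)$ vertices, $m/2$ distinct edges each doubled, and is connected and acyclic as a simple graph. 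In particular $m$ must be even, $\#(\pi) = m/2 + 1$, and the edge-quotient induced by $\pi$ is a pairing of $\{e_1,\dots,e_m\}$.

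First I would establish the forward direction. Suppose $H^\pi$ is a double tree. Since each edge is doubled, the induced partition on edges $\calE(\pi)$ (the partition of $\msr{E}(H)$ in which $e_i\sim_\pi e_{i'}$ iff $e_i,e_{i'}$ have the same pair of endpoints in $H^\pi$) is a pair partition; and because $e_i$ joins $v_i$ to $v_{i+1}$, two edges $e_i,e_{i'}$ coincide in $H^\pi$ precisely when their endpoint pairs agree as \emph{unordered} pairs, i.e. either $[v_i]_\pi=[v_{i'}]_\pi,\,[v_{i+1}]_\pi=[v_{i'+1}]_\pi$, or $[v_i]_\pi=[v_{i'+1}]_\pi,\,[v_{i+1}]_\pi=[v_{i'}]_\pi$. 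A short parity/orientation argument along the cycle rules out the first ``head-to-head'' possibility: walking the cycle and recording block-labels, a double tree is precisely a balanced parenthesization, so matched edges must be traversed in opposite directions, giving the head-to-tail identification $[v_i]_\pi=[v_{i'+1}]_\pi$ and $[v_{i+1}]_\pi=[v_{i'}]_\pi$. This is exactly the description of the Kreweras complement under the interlacing $1<\bar 1<\dots<n<\bar n$: the block structure of $\calK(\pi)$ on $\msr{E}(H)$ is forced to be $\calE(\pi)$, which we have just shown is a pair partition, and since $H^\pi$ (hence its edge quotient) is planar/noncrossing, $\calK(\pi)\in\nc_2(\msr{E}(H))$; noncrossingness of $\pi$ itself then follows because $\calK$ is an involution-type bijection on noncrossing partitions (if $\pi$ had a crossing, so would $\calK(\pi)$). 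The cleanest way to package all of this is to cite Proposition 2.4 of \cite{AM20} directly, of which this lemma is the stated special case, and then simply specialize the general cactus/$\nc$ correspondence to pads of size two.

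Conversely, suppose $\pi\in\nc(\msr{V}(H))$ and $\calK(\pi)\in\nc_2(\msr{E}(H))$. Then the edge quotient of $H^\pi$ is a pair partition, so every edge of $H^\pi$ has multiplicity exactly $2$ and $H^\pi$ has $m/2$ distinct edges on $\#(\pi)$ vertices. Connectedness of $H^\pi$ is automatic since $H$ is connected and quotients of connected graphs are connected. It remains to see that the underlying simple graph is acyclic; this is where the noncrossing hypothesis does its work. Using the standard bijection between noncrossing pair partitions of $[m]$ and plane trees with $m/2$ edges, $\calK(\pi)\in\nc_2(\msr{E}(H))$ forces the underlying simple graph of $H^\pi$ to be a tree: an Euler-characteristic count gives $\#(\text{vertices}) - \#(\text{distinct edges}) = (m/2+1) - m/2 = 1$, and a connected graph with one more vertex than edge is a tree. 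Hence $H^\pi$ is a double tree, and the head-to-tail description of which edges are paired is read off from the explicit formula for $\calK$ under the interlacing, as in the forward direction.

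I expect the main obstacle to be the bookkeeping around the Kreweras complement: making precise the identification of $\calK(\pi)$ (defined abstractly via the interlacing $1<\bar1<\dots<n<\bar n$) with the concrete edge-partition $\calE(\pi)$ read off from the quotient graph, and verifying the head-to-tail matching rule $\{e_i,e_{i'}\}\in\calK(\pi)\iff [v_i]_\pi=[v_{i'+1}]_\pi$ and $[v_{i+1}]_\pi=[v_{i'}]_\pi$ with the correct index shift (and correct handling of the wraparound $v_{m+1}=v_1$). Everything else — the multiplicity-two and connectedness claims, and the Euler-characteristic tree count — is routine once the $\calK$-bookkeeping is pinned down. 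Since the excerpt already states that this lemma is a special case of \cite[Proposition 2.4]{AM20}, the intended proof is almost certainly just that specialization together with Figure \ref{figure:double_tree} as an illustration, so I would present the argument at that level of detail rather than re-deriving the cactus--$\nc$ correspondence from scratch.
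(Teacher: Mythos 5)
Your proposal is correct and matches the paper's approach: the paper gives no standalone proof of Lemma \ref{lem:kreweras} at all — it simply states that the lemma is a special case of \cite[Proposition 2.4]{AM20} and points to Figure \ref{figure:double_tree} as an illustration, exactly as you predicted at the end. Your extra sketch of the direct argument (multiplicity-two edges give a pair partition of $\msr{E}(H)$, the parity/cut argument on the cycle walk rules out head-to-head identifications, and the Euler count $\#(\pi)=m/2+1$ from the Kreweras relation $\#(\pi)+\#(\calK(\pi))=m+1$ forces the underlying simple graph to be a tree) is sound and would correctly fill in the cited result, but it is more than the paper itself supplies.
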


\begin{figure}
\begingroup%
  \makeatletter%
  \providecommand\color[2][]{%
    \errmessage{(Inkscape) Color is used for the text in Inkscape, but the package 'color.sty' is not loaded}%
    \renewcommand\color[2][]{}%
  }%
  \providecommand\transparent[1]{%
    \errmessage{(Inkscape) Transparency is used (non-zero) for the text in Inkscape, but the package 'transparent.sty' is not loaded}%
    \renewcommand\transparent[1]{}%
  }%
  \providecommand\rotatebox[2]{#2}%
  \newcommand*\fsize{\dimexpr\f@size pt\relax}%
  \newcommand*\lineheight[1]{\fontsize{\fsize}{#1\fsize}\selectfont}%
  \ifx\svgwidth\undefined%
    \setlength{\unitlength}{360bp}%
    \ifx\svgscale\undefined%
      \relax%
    \else%
      \setlength{\unitlength}{\unitlength * \real{\svgscale}}%
    \fi%
  \else%
    \setlength{\unitlength}{\svgwidth}%
  \fi%
  \global\let\svgwidth\undefined%
  \global\let\svgscale\undefined%
  \makeatother%
  \begin{picture}(1,0.3)%
    \lineheight{1}%
    \setlength\tabcolsep{0pt}%
    \put(0.19862893,1.96292306){\color[rgb]{0,0,0}\makebox(0,0)[lt]{\begin{minipage}{0.14438897\unitlength}\raggedright \end{minipage}}}%
    \put(0,0){\includegraphics[width=\unitlength,page=1]{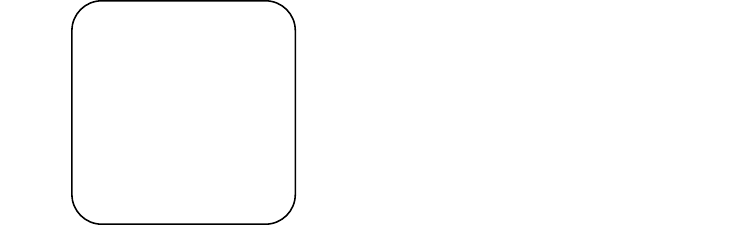}}%
    \put(0.48603101,0.14137883){\color[rgb]{0,0,0}\makebox(0,0)[lt]{\lineheight{1.66666663}\smash{\begin{tabular}[t]{l}$\stackrel{\pi}{\mapsto}$\end{tabular}}}}%
    \put(0,0){\includegraphics[width=\unitlength,page=2]{fig_double_tree.pdf}}%
  \end{picture}%
\endgroup%

\caption{An example of Lemma \ref{lem:kreweras} in action. On the left, the blocks of $\pi$ are indicated by the coloring of the vertices; the dotted lines connect the pairs in the corresponding Kreweras complement $\calK(\pi)$. The vertex identifications produce the quotient $H^\pi$ on the right, a double tree as guaranteed by the lemma.}\label{figure:double_tree}
\end{figure}

We can now give a short proof of Proposition \ref{prop:casedequalto2}.

\begin{proof}[Proof of Proposition \ref{prop:casedequalto2}]
If $G$ is the undecorated $m$-cycle in \eqref{eq:graph_of_tensors} with diamonds $\mbf{W}_N^{(i_1)}, \ldots, \mbf{W}_N^{(i_m)}$ in counterclockwise order, then
\[
  \tau[G] = \frac{1}{N} \E[ \trace(\mbf{W}_N^{(i_1)}\cdots
  \mbf{W}_N^{(i_m)})].
\]
Combining Lemma \ref{lem:kreweras} with properties \ref{cond:double_tree} and \ref{cond:multiplicativity}, we obtain
\begin{align*}
     \lim_{N\to \infty} \tau[G] &= \sum_{\pi\in \mathcal{P}(V)} \lim_{N\to \infty} \tau^0[G^\pi] \\
     &= \sum_{\substack{\pi\in \nc(\msr{V}(H)) \text{ s.t.}\\ \sigma = \calK(\pi) \in \mathcal{NC}_2(\msr{E}(H))}} \lim_{N\to \infty} \tau^0[G^\pi] \\
     &= \sum_{\sigma \in \mathcal{NC}_2(\msr{E}(H))} \prod_{\{i, i'\}\in \sigma} \mbf{K}_{i, i'},
\end{align*}
which is precisely the characterization of a semicircular family of covariance $\mbf{K}$ given in \eqref{eq:semicircular_family}.
\end{proof}

\section{Proofs of the main results}\label{sec:proof_of_theorem}

\subsection{Setup and complications for \texorpdfstring{$d \geq 3$}{d geq 3}}\label{sec:complications}

Let $G^\pi = (V^\pi, W, E)$ be a quotient of the graph $G$ in \eqref{eq:graph_of_tensors}. The analogue of \eqref{eq:injective_traffic_state} for the \emph{normalized} contracted tensor ensemble
\[
  \left(\frac{1}{\sqrt{N}}\mbf{T}_{d, N}\left[u_N^{(i, 1)} \otimes \cdots \otimes u_N^{(i, d-2)}\right]\right)_{i \in I}
\]
can be written as 
\[
  \tau^0[G^\pi] = \frac{1}{N^{\frac{m}{2} + 1}} \sum_{\phi: V^\pi \hookrightarrow [N]} \E\bigg[\prod_{w_1 \in W_1} \mbf{T}_{d, N}(w_1 | \phi)\bigg] 
  \prod_{w_2 \in W_2} u_N(w_2 | \phi).
\]
Since $|u_N^{(i, j)}(k)| \leq \norm{u_N^{(i, j)}}_2 = 1$, our strong moment assumption \eqref{eq:moment_bound} bounds
\begin{equation}\label{eq:bounded_summand}
  \E\bigg[\prod_{w_1 \in W_1} \mbf{T}_{d, N}(w_1 | \phi)\bigg]
  \prod_{w_2 \in W_2} u_N(w_2 | \phi) = O_m(1)
\end{equation}
uniformly in $(\pi, \phi)$. We immediately arrive at the crude asymptotic
\begin{equation}\label{eq:crude_asymptotic}
  \tau^0[G^\pi] =  O_m(N^{|V^\pi| - \frac{m}{2} - 1});
\end{equation}
however, in many cases, the independence and centeredness of our tensor entries result in 
\begin{equation}\label{eq:vanishing_summands}
  \E\bigg[\prod_{w_1 \in W_1} \mbf{T}_{d, N}(w_1 | \phi)\bigg] = 0, \qquad \forall \phi: V^\pi \hookrightarrow [N].
\end{equation}
To understand when this happens, we introduce some notation. For any $w_1 \in W_1$, we define $\mcal{B}_{\pi}(w_1)$ to be the set of neighbors of $w_1$ in $G^\pi$ with the additional information of edge multiplicity. Formally,
\[
  \mcal{B}_{\pi}(w_1) = \Big\{\big([v]_\pi, \#(\{e : [v]_\pi \stackrel{e}{\sim} w_1\})\big)\Big\} \subset V^\pi \times \N.
\]
In the special case of $\pi = \{\{v\} : v \in V\}$ and $G^\pi = G$, we simply write $\mcal{B}(w_1)$. By a slight abuse of notation, we sometimes refer to a vertex $[v]_\pi \in \mcal{B}_\pi(w_1)$ or a vertex $v \in \mcal{B}(w_1)$ instead of the tuple. For a map $\phi: V^\pi \hookrightarrow [N]$, the tensor entries $\mbf{T}_{d, N}(w_1 | \phi)$ and $\mbf{T}_{d, N}(w_1' | \phi)$ are dependent (in fact, identical) iff $B_\pi(w_1) = B_\pi(w_1')$, in which case we say that the diamonds $w_1$ and $w_1'$ are \emph{overlaid} in $G^\pi$.

Definition \ref{defn:wigner_tensor} \ref{cond:centered} of our Wigner tensor ensemble implies that \eqref{eq:vanishing_summands} holds unless
\begin{equation}\label{eq:centering_in_graph}
  \#(\mcal{B}_\pi(w_1)) = 1 \text{ or } \exists w_1' \in W_1\setminus\{w_1\}: \mcal{B}_\pi(w_1) = \mcal{B}_\pi(w_1'), \qquad \forall w_1 \in W_1.
\end{equation}
So, we can restrict ourselves to partitions $\pi \in \mcal{P}(V)$ that satisfy this property. Assuming the suboptimality of $\#(\mcal{B}_\pi(w_1)) = 1$ for now, this says that diamond overlays are common. To keep track of these, we define the function
\[
  \sigma_{(\cdot)}: \calP(V) \to \calP(W_1), \quad \pi \mapsto \sigma_\pi,
\]
where the partition $\sigma_\pi$ satisfies
\begin{equation}\label{eq:neighborhood_partition}
  w_1 \sim_{\sigma_\pi} w_1' \iff \mcal{B}_\pi(w_1) = \mcal{B}_\pi(w_1').
\end{equation} 

It will be necessary to classify the index vertices $V = V_{\op{in}} \sqcup V_{\op{out}}$ in $G$ according to their positions in the graph: the \emph{inner} vertices $V_{\op{in}}$ are those inside of the cycle (adjacent only to diamonds); the outer vertices $V_{\op{out}}$ are those outside of the cycle (adjacent to boxes). We also use an auxiliary graph $H^\pi$ to compress the information of $G^\pi$. 

\begin{definition}\label{defn:H_graph}[$H$ and $H^\pi$]
Let $H=(\msr{V}(H), \msr{E}(H))$ be the graph with vertices $\msr{V}(H) = \vin$ and edges $\msr{E}(H) = W_1$. Two vertices $v, v' \in \msr{V}(H)$ are connected by an edge $e \in \msr{E}(H)$ iff the diamond $e$ is adjacent to $v$ and $v'$ in $G$. For a partition $\pi \in \calP(V)$, we use the shorthand notation $H^\pi$ for the quotient graph $H^{\pi\restriction_{\msr{V}(H)}}$. 
\end{definition}

As before, $H$ is a simple $m$-cycle whose vertices are the inner vertices of $G$ and whose edges correspond to the diamonds of $G$. The construction of $H^\pi$ does not materially differ from the matrix case of $d=2$; however, one no longer has a simple characterization of the quotients that produce nonzero terms $\tau^0[G^\pi] \neq 0$. In particular, if $d \geq 3$, then outer vertices can be identified with inner vertices to create a diamond overlay while keeping the inner vertices separate in $H^\pi$. Consequently, edges do not even have to be incident in $H^\pi$ to be in the same block of $\sigma_\pi$. See Figure \ref{figure:overlay_square} and Figure \ref{figure:overlay_hexagon} for examples.

\begin{figure}
\begingroup%
  \makeatletter%
  \providecommand\color[2][]{%
    \errmessage{(Inkscape) Color is used for the text in Inkscape, but the package 'color.sty' is not loaded}%
    \renewcommand\color[2][]{}%
  }%
  \providecommand\transparent[1]{%
    \errmessage{(Inkscape) Transparency is used (non-zero) for the text in Inkscape, but the package 'transparent.sty' is not loaded}%
    \renewcommand\transparent[1]{}%
  }%
  \providecommand\rotatebox[2]{#2}%
  \newcommand*\fsize{\dimexpr\f@size pt\relax}%
  \newcommand*\lineheight[1]{\fontsize{\fsize}{#1\fsize}\selectfont}%
  \ifx\svgwidth\undefined%
    \setlength{\unitlength}{360bp}%
    \ifx\svgscale\undefined%
      \relax%
    \else%
      \setlength{\unitlength}{\unitlength * \real{\svgscale}}%
    \fi%
  \else%
    \setlength{\unitlength}{\svgwidth}%
  \fi%
  \global\let\svgwidth\undefined%
  \global\let\svgscale\undefined%
  \makeatother%
  \begin{picture}(1,0.35)%
    \lineheight{1}%
    \setlength\tabcolsep{0pt}%
    \put(0,0){\includegraphics[width=\unitlength,page=1]{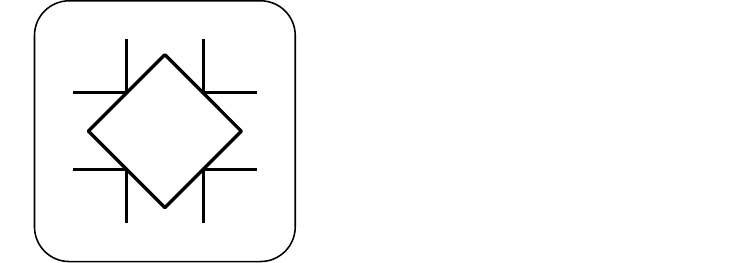}}%
    \put(0.19862893,1.96292303){\color[rgb]{0,0,0}\makebox(0,0)[lt]{\begin{minipage}{0.14438897\unitlength}\raggedright \end{minipage}}}%
    \put(0,0){\includegraphics[width=\unitlength,page=2]{fig_overlay_square.pdf}}%
    \put(0.48613873,0.16637829){\color[rgb]{0,0,0}\makebox(0,0)[lt]{\lineheight{1.66666675}\smash{\begin{tabular}[t]{l}$\stackrel{\pi}{\mapsto}$\end{tabular}}}}%
  \end{picture}%
\endgroup%

\caption{An example of a partition $\pi$ that overlays every diamond with at least one other diamond. For convenience, we have omitted the boxes corresponding to the contracting vectors. On the left, the blocks of $\pi$ are indicated by the coloring of the vertices. On the right, the dotted lines indicate the blocks of $\sigma_\pi$ in $H^\pi$. Note that the quotient graph $H^\pi$ is the same as the original graph $H$ in this case and that $\sigma_\pi$ is a \emph{crossing} pairing.}\label{figure:overlay_square}
\end{figure}

\begin{figure}
\begingroup%
  \makeatletter%
  \providecommand\color[2][]{%
    \errmessage{(Inkscape) Color is used for the text in Inkscape, but the package 'color.sty' is not loaded}%
    \renewcommand\color[2][]{}%
  }%
  \providecommand\transparent[1]{%
    \errmessage{(Inkscape) Transparency is used (non-zero) for the text in Inkscape, but the package 'transparent.sty' is not loaded}%
    \renewcommand\transparent[1]{}%
  }%
  \providecommand\rotatebox[2]{#2}%
  \newcommand*\fsize{\dimexpr\f@size pt\relax}%
  \newcommand*\lineheight[1]{\fontsize{\fsize}{#1\fsize}\selectfont}%
  \ifx\svgwidth\undefined%
    \setlength{\unitlength}{360bp}%
    \ifx\svgscale\undefined%
      \relax%
    \else%
      \setlength{\unitlength}{\unitlength * \real{\svgscale}}%
    \fi%
  \else%
    \setlength{\unitlength}{\svgwidth}%
  \fi%
  \global\let\svgwidth\undefined%
  \global\let\svgscale\undefined%
  \makeatother%
  \begin{picture}(1,0.35)%
    \lineheight{1}%
    \setlength\tabcolsep{0pt}%
    \put(0.19862893,1.96292308){\color[rgb]{0,0,0}\makebox(0,0)[lt]{\begin{minipage}{0.14438897\unitlength}\raggedright \end{minipage}}}%
    \put(0,0){\includegraphics[width=\unitlength,page=1]{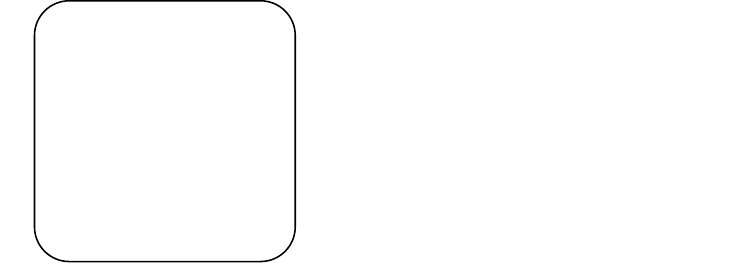}}%
    \put(0.48613874,0.16637815){\color[rgb]{0,0,0}\makebox(0,0)[lt]{\lineheight{1.66666663}\smash{\begin{tabular}[t]{l}$\stackrel{\pi}{\mapsto}$\end{tabular}}}}%
    \put(0,0){\includegraphics[width=\unitlength,page=2]{fig_overlay_hexagon.pdf}}%
  \end{picture}%
\endgroup%

\caption{Another example of a partition $\pi$ that overlays every diamond with at least one other diamond. Here, the yellow, pink, and purple vertices are identified in a way that resembles the optimal situation from the usual Wigner case. We will see that this is still not enough to overcome the defects in the other parts of the graph.}\label{figure:overlay_hexagon}
\end{figure}

This already rules out applying our crude asymptotic \eqref{eq:crude_asymptotic}, which is sufficient for the analysis in the $d = 2$ case. Roughly speaking, the diamond overlay condition allows for too many vertices in $G^\pi$: one can find partitions $\pi \in \mcal{P}(V)$ such that $\tau^0[G^\pi] \neq 0$ with
\[
  \#(\{\phi: V^\pi \hookrightarrow [N]\}) = \Theta(N^{(d-1)\frac{m}{2} + 1}).
\]
Since $d \geq 3$, this is much larger than the normalization term $N^{\frac{m}{2} + 1}$.

The issue comes from the overly generous bound of the summands in \eqref{eq:bounded_summand}, which can be of much smaller order depending on the vertex labels $\phi$. For example, if the contracting vectors are standard basis vectors, then the product $\prod_{w_2\in W_2} \un{w_2}{\phi}$
will be $0$ for most $\phi$. The opposite extreme would be if the contracting vectors were completely flat with $N^{-\frac{1}{2}}$ for every entry, in which case the product is $N^{-(d-2)\frac{m}{2}}$ for any $\phi$. Treating the general case without knowing the exact shape of the contracting vectors greatly complicates the analysis. Recognizing the importance of the inner-outer interactions, we split our partition $\pi = \pi_{\op{in}} \sqcup \pi_{\op{out}} \sqcup \pi_{\op{mix}}$ into
\begin{itemize}
    \item $\pi_{\op{in}}$, the set of blocks consisting only of inner vertices;
    \item $\pi_{\op{out}}$, the set of blocks consisting only of outer vertices;
    \item $\pi_{\op{mix}}$, the set of remaining blocks, each of which has at least one inner vertex and one outer vertex.
\end{itemize}
Our goal is then to understand how the number of outer vertices in each block of $\pi$ relates to the structure of $\sigma_\pi$. 

\subsection{Convergence in expectation}\label{sec:convergence_in_expectation}

In the previous section, we saw that a major complication comes from the existence of partitions $\pi\in \calP(V)$ such that $\tau^0[G^\pi]\neq 0$ and $\pim \neq \emptyset$. If we assume for the time being that such mixed partitions do not contribute in the limit, then we can prove convergence to a semicircular family in a manner similar to Proposition \ref{prop:casedequalto2}. Actually proving that mixed partitions do not contribute in the limit involves a delicate technical argument that we defer to Section \ref{sec:tech_sec_one}. 

Properties \ref{cond:double_tree} and \ref{cond:multiplicativity} in Proposition \ref{prop:casedequalto2} have natural extensions to the general setting of $d\geq 2$. Since we will need to show that the contracted ensemble satisfies these properties, we state them as lemmas. 

\begin{lemma}[Supported on double trees]
\label{lem:supportontrees}
\[
  \tau^0[G^\pi]=O_{m}(N^{-1})
\]
unless $H^\pi$ is a double tree and $\pi_{\op{mix}}=\emptyset$, where the asymptotic is uniform over all possible contracting vectors $(u_N^{(i, j)})_{i\in I, j \in [d-2]} \subset \sphr^{N-1}$.
\end{lemma}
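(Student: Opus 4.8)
The plan is to estimate $\tau^0[G^\pi]$ directly, in the notation of Section~\ref{sec:complications}. First I would strip off the random factors using centeredness and the uniform moment bound. By Definition~\ref{defn:wigner_tensor}\ref{cond:centered} together with the independence of tensor entries attached to distinct $\sigma_\pi$-classes, $\E\big[\prod_{w_1 \in W_1} \mbf{T}_{d,N}(w_1 | \phi)\big] = 0$ for every $\phi$ unless $\pi$ satisfies \eqref{eq:centering_in_graph}, i.e.\ every diamond is either \emph{degenerate} ($\#(\mcal{B}_\pi(w_1)) = 1$) or \emph{overlaid} with another diamond; call such $\pi$ \emph{admissible}, and note that only these contribute. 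For any $\pi$, the moment bound \eqref{eq:moment_bound} gives $\big|\E[\prod_{w_1} \mbf{T}_{d,N}(w_1 | \phi)]\big| \le C_m$ uniformly in $(\pi,\phi)$, since the expectation factors over $\sigma_\pi$ into at most $m$ moments of order at most $m$.

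Next I would extract the saving coming from the contracting vectors. Since each box $w_2 \in W_2$ is adjacent to a single index vertex, $\sum_{\phi : V^\pi \to [N]} \prod_{w_2 \in W_2} |u_N(w_2 | \phi)|$ factors as $\prod_{B \in V^\pi} S(B)$, where $S(B) := \sum_{k \in [N]} \prod_{w_2 \sim B} |u_N^{(w_2)}(k)|$. Cauchy--Schwarz and $\|u_N^{(i,j)}\|_\infty \le 1$ give $S(B) \le N$ if $B$ carries no box, $S(B) \le N^{1/2}$ if $B$ carries exactly one box, and $S(B) \le 1$ if $B$ carries at least two boxes; equivalently, the trichotomy is governed by whether $B$ contains $0$, exactly $1$, or at least $2$ outer vertices. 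Dropping injectivity for an upper bound and combining with the previous step,
\[
  |\tau^0[G^\pi]| \;\le\; \frac{C_m}{N^{m/2+1}} \prod_{B \in V^\pi} S(B) \;\le\; C_m\, N^{\,\#(V_0) + \frac12 \#(V_1) - \frac m2 - 1},
\]
where $V_0$ (resp.\ $V_1$) is the set of blocks of $\pi$ with $0$ (resp.\ exactly $1$) outer vertices. All these estimates are uniform in the contracting vectors, so the claimed uniformity comes for free.

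It then remains to prove the combinatorial inequality that for admissible $\pi$ one has $\#(V_0) + \tfrac12 \#(V_1) \le \tfrac m2 + 1$, with equality precisely when $H^\pi$ is a double tree and $\pim = \emptyset$, and with the left-hand side $\le \tfrac m2$ otherwise. The skeleton is familiar: $V_0 = \pin$; the vertex set of $H^\pi$ is $\pin \sqcup \pim$; $H^\pi$ is a connected graph on $m$ edges; and if no diamond is degenerate, admissibility forces every edge of $H^\pi$ to have multiplicity $\ge 2$, so $\#(\msr{V}(H^\pi)) \le \tfrac m2 + 1$ with equality exactly for a double tree. A short argument shows a pure-outer block cannot be a one-box block of an admissible $\pi$ (a singleton outer vertex is not a $G$-neighbor of the diamond it would be overlaid into), so $V_1 \subseteq \pim$ and
\[
  \#(V_0) + \tfrac12 \#(V_1) \;\le\; \#(\pin) + \tfrac12 \#(\pim) \;=\; \#(\msr{V}(H^\pi)) - \tfrac12 \#(\pim) \;\le\; \tfrac m2 + 1 - \tfrac12 \#(\pim);
\]
in particular the cases $\pim = \emptyset$ with $H^\pi$ not a double tree, and $\#(\pim) \ge 2$, already land at order $N^{-1}$.

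The hard part — and the bulk of Section~\ref{sec:tech_sec_one} — is to push the borderline estimate from $O(N^{-1/2})$ to $O(N^{-1})$ when $\#(\pim) = 1$: one must rule out an admissible $\pi$ for which $H^\pi$ is a double tree and the unique mixed block is itself a one-box block. This requires a detailed analysis of the overlay identities $\mcal{B}_\pi(w_1) = \mcal{B}_\pi(w_1')$: if an outer vertex of $w_1$ is absorbed into an inner block $B$, then matching the neighbor set of $w_1$ against that of its overlay partner $w_1'$ forces either a second outer vertex into $B$ (so $B$ is not one-box after all) or an additional mixed block elsewhere; propagating this constraint around the cycle $H$ and using that $H^\pi$ is a tree shows the accumulated defect is always at least a full power of $N$. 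Degenerate diamonds must be folded into the same framework — each is a self-loop of $H^\pi$ carrying $d-2 \ge 1$ boxes with no pairing partner, and a direct count shows they too cost at least $N^{-1}$ (with slack when $d \ge 4$). Collecting the admissible partitions that survive at order $N^0$, namely the double trees with $\pim = \emptyset$, completes the proof.
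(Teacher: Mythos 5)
Your first step (establish the pointwise asymptotic $|\tau^0[G^\pi]| \le C_m\,N^{\#(V_0) + \frac12\#(V_1) - \frac m2 - 1}$) matches the paper's \eqref{eq:final_asymptotic}, after observing that for admissible $\pi$ no singleton outer vertex can occur, so $V_0 = \pin$ and $V_1 = \pim^{(1)}$. That part is fine.

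The gap is in the combinatorial step. You write that ``admissibility forces every edge of $H^\pi$ to have multiplicity $\ge 2$, so $\#(\msr{V}(H^\pi)) \le \tfrac m2 + 1$.'' This is exactly what \emph{fails} once $d \geq 3$, and the paper goes to some length to point this out (Section \ref{sec:complications}, Proposition \ref{prop:mix_a_lot}, Figures \ref{figure:overlay_square} and \ref{figure:overlay_hexagon}): an overlay $\mcal{B}_\pi(w_1) = \mcal{B}_\pi(w_1')$ is an equality of full $d$-element neighborhoods, and it can be achieved by identifying \emph{outer} vertices of one diamond with \emph{inner} vertices of the other, leaving the corresponding edges of $H^\pi$ completely disjoint. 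Figure \ref{figure:overlay_square} exhibits an admissible $\pi$ for $m=4$ with $H^\pi = H$ a simple $4$-cycle, so $\#(\msr{V}(H^\pi)) = 4 > 3 = \tfrac m2 + 1$; your chain $\#(\pin) + \tfrac12\#(\pim) = \#(\msr{V}(H^\pi)) - \tfrac12\#(\pim) \le \tfrac m2 + 1 - \tfrac12\#(\pim)$ is simply false for this partition. Consequently you also misidentify where the real work lies: it is not the borderline $\#(\pim)=1$ case, but rather that the inequality $\#(\pin) + \tfrac12\#(\pim^{(1)}) \le \tfrac m2 + 1$ cannot be obtained by counting $H^\pi$-multiedges at all. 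The paper's actual proof (Proposition \ref{prop:double_tree} via Lemmas \ref{lem:red_control} and \ref{lem:not_red_control}) introduces the good/bad dichotomy on $\sigma_\pi$-blocks, builds the two-colored graph $\calH^\pi$ merging only the good (multiplicity-two, head-to-tail) overlays into blue edges, bounds $\#(\pin^{(\op{blue})})$ by $\#(E_{\op{blue}})+1$ via a connectivity argument, and bounds $\#(\pin^{(\neg\op{blue})}) + \tfrac12\#(\pim^{(1)})$ by $\tfrac12\#(E_{\op{red}})$ through a detailed classification of internal block types and an edge-deletion/vertex-coloring potential-function argument. None of that is captured by the sketch ``propagating this constraint around the cycle'' — the obstruction is global, not a single defect at one borderline block.
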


\begin{lemma}[Multiplicativity]
\label{lem:multiplicativity}
For any partition $\rho\in \calP(\vin)$ such that $H^\rho$ is a double tree,
    \begin{equation}
    \label{eq:factorizationofcovariances}
     \sum_{\substack{\pi\in \calP(V) :\\  \pi\restriction_{\vin}=\rho,\, \pim=\emptyset}} \tau^0[G^\pi] = \prod_{\{i, i'\}} \mbf{K}_{i, i'}^{(N)}+O_{m, d}(N^{-1}),
    \end{equation}
     where the product runs over the double edges $\{i, i'\}$ in the double tree $H^\rho$ and $(\mbf{K}_{i, i'}^{(N)})_{i, i' \in I}$ is the Gram matrix defined in \eqref{eq:gram_matrix}.  As before, the asymptotic is uniform over all possible contracting vectors $(u_N^{(i, j)})_{i\in I, j \in [d-2]} \subset \sphr^{N-1}$.
\end{lemma}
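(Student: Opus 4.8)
The plan is to work directly with the partitions $\pi$ indexing the sum on the left of \eqref{eq:factorizationofcovariances}. Fix $\rho\in\calP(\vin)$ with $H^\rho$ a double tree; in particular $m$ is even. The constraints $\pi\restriction_{\vin}=\rho$ and $\pim=\emptyset$ force $\pin=\rho$, so such a $\pi$ is the same data as an arbitrary partition of the $(d-2)m$ outer vertices $\vout$, of which there are only $O_{m,d}(1)$. First I would pin down which $\pi$ can make $\tau^0[G^\pi]\neq 0$. By the centering condition \eqref{eq:centering_in_graph}, every diamond must be overlaid with at least one other: the alternative $\#(\mcal{B}_\pi(w_1))=1$ would identify the two inner endpoints of $w_1$, creating a loop in the tree $H^\rho$, which is impossible. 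Two overlaid diamonds share the same pair of inner-endpoint blocks in $H^\rho$, hence are parallel edges; since $H^\rho$ is a double tree, every parallel class has size exactly two. So the $m$ diamonds must overlay precisely along the $m/2$ double edges of $H^\rho$, head to tail in the sense of Lemma \ref{lem:kreweras}, and the overlay relation is exactly that matching. Because $\pim=\emptyset$, no outer vertex shares a block with an inner vertex; thus for partnered diamonds $w,\tilde w$ the overlay condition $\mcal{B}_\pi(w)=\mcal{B}_\pi(\tilde w)$ is equivalent, after deleting the matching inner neighbours, to the requirement that the multiset of outer-neighbour blocks of $w$ coincide with that of $\tilde w$.

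Next I would split the surviving $\pi$ into a \emph{generic} family and a negligible remainder. Call $\pi$ generic if each of its outer blocks has size exactly two and pairs an outer vertex of some diamond $w$ with an outer vertex of its partner $\tilde w$; such a $\pi$ is the data of a tuple $(\beta_k)_{k=1}^{m/2}\in\mfk{S}_{d-2}^{m/2}$, where $\beta_k$ matches the $j$-th contraction slot of the first diamond of the $k$-th double edge to the $\beta_k(j)$-th slot of the second. For a generic $\pi$, $V^\pi$ has $m/2+1$ inner blocks and $(d-2)m/2$ outer blocks, so $\#(\{\phi:V^\pi\hookrightarrow[N]\})=N^{(d-1)m/2+1}(1+O_{m,d}(N^{-1}))$; each of the $m/2$ overlaid pairs is a single centered tensor entry whose $d$ indices are all distinct (the two inner labels are distinct from each other and from every outer label, and the $d-2$ outer labels are distinct), so by \ref{cond:gote_variance} its second moment is $\binom{d}{1,\dots,1}^{-1}=(d!)^{-1}$, and entries from different pairs are independent; and each outer block, carrying the boxes $u_N^{(i_k,j)}$ and $u_N^{(i'_k,\beta_k(j))}$ of the two partnered diamonds, contributes a scalar that becomes $\inn{u_N^{(i_k,j)}}{u_N^{(i'_k,\beta_k(j))}}$ once its label is summed. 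Using $\|u_N^{(i,j)}\|_2=1$ and Cauchy--Schwarz to absorb the diagonal corrections incurred by relaxing injectivity of the summations, this gives
\[
  \tau^0[G^\pi]=\frac{1}{(d!)^{m/2}}\prod_{k=1}^{m/2}\prod_{j=1}^{d-2}\inn{u_N^{(i_k,j)}}{u_N^{(i'_k,\beta_k(j))}}+O_{m,d}(N^{-1}).
\]
For a \emph{non}-generic $\pi$ -- one with an outer block of size $\neq 2$, or one mixing outer slots from two different double edges -- at least one identification beyond those above is imposed; this deletes a free outer label (a factor $N$), whereas the associated sum of a product of at least two entries of unit $\ell^2$-norm is $O(1)$ by Cauchy--Schwarz, so $\tau^0[G^\pi]=O_{m,d}(N^{-1})$. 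Summing the $O_{m,d}(1)$ nongeneric terms keeps the total error $O_{m,d}(N^{-1})$.

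It then remains to sum the generic contributions over $(\beta_k)_k\in\mfk{S}_{d-2}^{m/2}$. The product factors over double edges, and for a single one, with diamonds of colours $i,i'$,
\[
  \sum_{\beta\in\mfk{S}_{d-2}}\prod_{j=1}^{d-2}\inn{u_N^{(i,j)}}{u_N^{(i',\beta(j))}}=\per\!\left[\left(\inn{u_N^{(i,j)}}{u_N^{(i',l)}}\right)_{j,l=1}^{d-2}\right]=(d-2)!\,\inn{u_N^{(i,1)}\odot\cdots\odot u_N^{(i,d-2)}}{u_N^{(i',1)}\odot\cdots\odot u_N^{(i',d-2)}}
\]
by Minc's permanental identity recalled in Example \ref{eg:extreme_cases}; by the definition \eqref{eq:gram_matrix} this equals $(d-2)!\,d(d-1)\,\mbf{K}_{i,i'}^{(N)}=d!\,\mbf{K}_{i,i'}^{(N)}$. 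Multiplying over the $m/2$ double edges and cancelling $(d!)^{-m/2}$ leaves exactly $\prod_{\{i,i'\}}\mbf{K}_{i,i'}^{(N)}$, together with the accumulated $O_{m,d}(N^{-1})$; uniformity in the contracting vectors is automatic since only $\|u_N^{(i,j)}\|_2=1$ and \eqref{eq:moment_bound} were used. The hard part is the second step: showing that \emph{all} nongeneric identifications, and the relaxation of injectivity in the generic ones, are genuinely $O(N^{-1})$ requires a careful degree-of-freedom count played against Cauchy--Schwarz bounds on the contracting-vector products -- delicate precisely because $\|u_N^{(i,j)}\|_\infty$ is \emph{not} assumed small, so the gain of $N^{-1}$ must come entirely from the lost vertex rather than from the scalars.
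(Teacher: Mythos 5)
Your first paragraph is correct and matches the paper: with $\pim=\emptyset$ and $\pin=\rho$ fixed, the only way to avoid the centering condition killing the term is for each diamond to share its inner endpoints (a loop would be needed otherwise, impossible in a tree) so the overlays must run along the double edges of $H^\rho$, and the overlay condition on the outer side is a uniform block permutation condition. This is exactly the paper's reduction.

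The gap is in the second paragraph, and it is a genuine one, not a ``delicate but routine'' estimate: the two claims you make there --- that non-generic $\pi$ contribute $O(N^{-1})$ and that relaxing injectivity for generic $\pi$ costs $O(N^{-1})$ --- are both false. Take $d=4$, $m=2$, a single double edge, and all contracting vectors equal to $e_1$. A ``generic'' $\pi$ has two outer blocks, and the injective sum is
\[
 \sum_{k_1\neq k_2} e_1(k_1)^2\,e_1(k_2)^2 = 0,
\]
which is not $\inn{e_1}{e_1}^2 = 1$ up to $O(N^{-1})$; the gap is $1$. Conversely, the non-generic $\pi$ that merges all four outer vertices into a single block contributes
\[
 \frac{1}{d!}\cdot 2!\cdot\sum_{k} e_1(k)^4 = \frac{2}{24} = \frac{1}{12} + O(N^{-1}),
\]
which is bounded away from $0$. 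Your own final caveat --- that the gain ``must come entirely from the lost vertex rather than from the scalars'' --- correctly identifies that the scalars give no gain, but there \emph{is} no compensating gain from the lost vertex either: after the inner labels saturate the normalization $N^{m/2+1}$, the outer sum is $O(1)$ by Cauchy--Schwarz for both generic and non-generic $\pi$, with no extra power of $N^{-1}$ to be had. So the asserted negligibility cannot be proved; it is simply not true.

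What rescues the final answer is that the two errors cancel \emph{exactly}: the sum over \emph{all} uniform block permutations $\pi\in\mcal{UBP}([d-2]_D)$ with injective labels $\phi:\pi\hookrightarrow[N]$ equals the sum over $\sigma\in\mcal{UBP}_2([d-2]_D)$ with \emph{arbitrary} labels $\phi:\sigma\to[N]$, via the identity
\[
\#\{\sigma\in\mcal{UBP}_2 : \sigma\text{ refines }\pi\} \;=\; \prod_{B\in\pi}\Big(\tfrac{\#(B)}{2}\Big)! ,
\]
which is precisely the weight $\nu_\pi$ that the variance profile \ref{cond:gote_variance} attaches to $\pi$. The non-generic terms and the non-injective completions are two names for the same set of summands, and the paper's proof makes this a combinatorial bijection (reassembly of labels), not an asymptotic estimate. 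Your proof reaches the right permanent because the two wrong claims cancel, but as written it is not a proof: the intermediate assertions are false and the cancellation is never established. To fix it, replace the second paragraph by the bijective reassembly, i.e.\ show that summing $\nu_\pi\sum_{\phi:\pi\hookrightarrow[N]}(\cdots)$ over all $\pi\in\mcal{UBP}$ gives $\frac{1}{d!}\sum_{\sigma\in\mathfrak{S}_{d-2}}\sum_{\phi:\sigma\to[N]}(\cdots)$ \emph{exactly}, and only then reduce to the permanent; the sole place where an $O(N^{-1})$ estimate enters is in discarding the inner labels and in decoupling injectivity across different double edges.
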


The proof of Lemma \ref{lem:supportontrees} (resp., Lemma \ref{lem:multiplicativity}) is deferred to Section \ref{sec:tech_sec_one} (resp., the end of this section). Drawing inspiration from Proposition \ref{prop:casedequalto2}, we will now show that Lemmas \ref{lem:supportontrees} and \ref{lem:multiplicativity} imply that the mixed moments of the contracted tensor ensemble are close to the mixed moments of a semicircular family. We recall the notation
\begin{align*}
  P_N(i_1, \dots, i_m) = &\bigg(\frac{1}{\sqrt{N}} \mbf{T}_{d, N}\left[u_N^{(i_1, 1)} \otimes \cdots \otimes u_N^{(i_1, d-2)}\right]\bigg) \\
  &\cdots \bigg(\frac{1}{\sqrt{N}} \mbf{T}_{d, N}\left[u_N^{(i_m, 1)} \otimes \cdots \otimes u_N^{(i_m, d-2)}\right]\bigg),
\end{align*}
which allows us to state
\begin{proposition}
\label{prop:convergenceinexpectation}
 For any $i_1, \ldots, i_m\in I$,
 \[
   \bigg|\frac{1}{N} \E\Big[\trace\big( P_N(i_1, \dots, i_m)\big)\Big]- \varphi\big(s_{i_1}^{(N)}\cdots s_{i_m}^{(N)}\big)\bigg|= O_{m, d}(N^{-1}),
 \]
 where $\big(s_i^{(N)}\big)_{i \in I}$ is a semicircular family of covariance $(\mbf{K}_{i, i'}^{(N)})_{i, i' \in I}$ and the asymptotic is uniform over all possible contracting vectors $(u_N^{(i, j)})_{i\in I, j \in [d-2]} \subset \sphr^{N-1}$.
\end{proposition}

\begin{proof}[Proof assuming Lemmas \ref{lem:supportontrees} and  \ref{lem:multiplicativity}]
Recall that the graph $G$ was defined so that
\[
  \tau[G] = \frac{1}{N} \E\Big[ \trace\big(P_N(i_1, \dots, i_m)\big)\Big].
\]
By Lemma \ref{lem:kreweras}, if $\rho \in \calP(\vin)= \calP(\msr{V}(H))$, then $H^\rho$ is a double tree iff $\rho \in \nc(\vin)$ and $\calK(\rho)\in \mathcal{NC}_2(\msr{E}(H))$. Combining this with Lemma \ref{lem:supportontrees}, we obtain
\[
  \tau[G] =  \sum_{\pi \in \calP(V)}  \tau^0[G^\pi] = \sum_{\substack{\rho \in \nc(\vin): \\ \calK(\rho) \in \nc_2(\msr{E}(H)) }}   \sum_{\substack{\pi \in \calP(V) :\\ \pi\restriction_{\vin} =\rho, \pim=\emptyset}} \tau^0[G^\pi] + O_{m, d}(N^{-1}).
\]
Applying Lemma \ref{lem:multiplicativity} to the inner sum yields
\begin{align*}
  \tau[G] &= \sum_{\substack{\rho \in \nc(\vin): \\ \calK(\rho) \in \nc_2(\msr{E}(H)) }} \prod_{\{i, i'\}\in \calK(\rho)} \mbf{K}_{i, i'}^{(N)}+O_{m, d}(N^{-1}) \\
  &= \sum_{\sigma \in \nc_2(\msr{E}(H))} \prod_{\{i, i'\}\in \sigma} \mbf{K}_{i, i'}^{(N)}+ O_{m, d}(N^{-1}).
\end{align*}
Finally, since $\bK^{(N)}$ is positive semidefinite, the semicircular family $(s_{i}^{(N)})_{i\in I}$ is well-defined with
\[
   \varphi\big(s_{i_1}^{(N)}\cdots s_{i_m}^{(N)}\big) = \sum_{\sigma \in \nc_2(\msr{E}(H))} \prod_{\{i, i'\}\in \sigma} \mbf{K}_{i, i'}^{(N)}
\]
by the characterization in \eqref{eq:semicircular_family}. We conclude that $\big|\tau[G]-\varphi\big(s_{i_1}^{(N)}\cdots s_{i_m}^{(N)}\big)\big| = O_{m, d}(N^{-1})$.  
\end{proof}

Naturally, if the limits $\bK_{i, i'} = \lim_{N\to \infty} \bK_{i, i'}^{(N)}$ exist for every $i, i'\in I$, then Proposition \ref{prop:convergenceinexpectation} implies that the contracted tensor ensemble converges in distribution to a semicircular family of covariance $(\bK_{i, i'})_{i, i' \in I}$. We conclude this section by proving Lemma \ref{lem:multiplicativity}.

\begin{proof}[Proof of Lemma \ref{lem:multiplicativity}]
Let $\rho \in \calP(\vin)$ be such that $H^\rho$ is a double tree. We divide the proof into three steps. First, we identify the partitions $\pi \in \mcal{P}(V)$ that contribute to the sum in \eqref{eq:factorizationofcovariances} and use this to simplify the formula for each of the corresponding $\tau^0[G^\pi]$. Secondly, we show that the values of $\phi|_{\vin}$ can be ignored in these formulas. Finally, we prove that our simplified formulation of the sum in \eqref{eq:factorizationofcovariances} can be factored, up to a $O_{m, d}(N^{-1})$ correction, as a product of sums over uniform block permutations, which in turn can be rewritten using the advertised inner product in \eqref{eq:gram_matrix}.  

\subsubsection*{Step 1: Simplifying the formula for $\tau^0[G^\pi]$}
By Lemma \ref{lem:kreweras}, we know that $H^\rho$ is a double tree iff $\rho \in \mcal{NC}(V_{\op{in}})$ and $\mcal{K}(\rho) \in \mcal{NC}_2(\msr{E}(H)) = \mcal{NC}_2(W_1)$, where $\mcal{K}(\rho)$ is the pair partition whose pairs are precisely the double edges in $H^\rho$. The appearance of uniform block permutations (Definition \ref{defn:uniform_block_permutation}) in our formulas is a result of the following relabeling of the vertices in $G$ according to $\mcal{K}(\rho)$ as suggested by Figure \ref{figure:covariance}.

For each pair of diamonds $D \in \mcal{K}(\rho)$, we designate one as the \emph{left} diamond $w_1^{D_L}$ and the other as the \emph{right} diamond $w_1^{D_R}$ so that $D = \{w_1^{D_L}, w_1^{D_R}\}$. The outer vertices $V_{\op{out}}$ in $G$ can then be partitioned according to their neighboring diamond:
\[
  V_{\op{out}} = \bigsqcup_{D \in \mcal{K}(\rho)} [d-2]_{D} =\bigsqcup_{D \in \mcal{K}(\rho)} \Big([d-2]_{D_L} \sqcup [d-2]_{D_R}\Big),
\]
where
\begin{align*}
  [d-2]_{D_L} &= \{ v\in \vout: v\sim w_1^{D_L}\}; \\
  [d-2]_{D_R} &= \{ v\in \vout: v\sim w_1^{D_R}\}.
\end{align*}

\begin{figure}
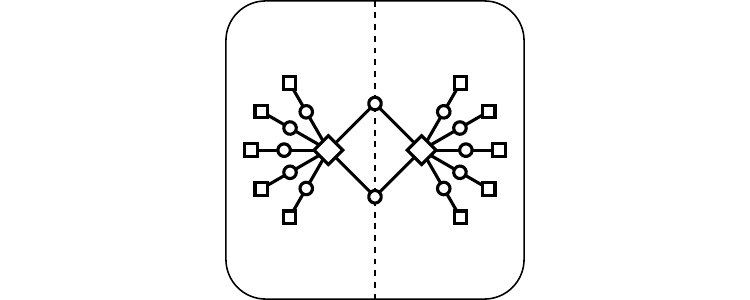
\caption{An example of the graph $G$ in \eqref{eq:graph_of_tensors} for $m = 2$. The dotted line down the middle is not part of the graph. We add it to motivate the appearance of uniform block permutations in our formula for the covariance. See the proof of Lemma \ref{lem:multiplicativity} for the precise details.}\label{figure:covariance}
\end{figure}

Now, suppose that $\pi \in \calP(V)$ satisfies $\pi\restriction_{\vin}=\rho$ and $\pim=\emptyset$. We can then factor $\pi=\pin \sqcup \pout$ into $\pin=\rho$ and $\pout\in \calP(\vout)$. Since $H^\rho$ is a double tree, every diamond in $G^\pi$ is adjacent to at least two index vertices $[v]_\pi \neq [v']_\pi$. In that case, the centeredness of the off-diagonal tensor entries in Definition \ref{defn:wigner_tensor} \ref{cond:centered} implies that $\tau^0[G^\pi] = 0$ unless every diamond is overlaid with at least one other diamond in $G^\pi$. Since $\pi_{\op{mix}} = \emptyset$ and $H^\rho$ is a double tree,
diamonds $w_1, w_1' \in W_1$ are overlaid in $G^\pi$ only if $\{w_1, w_1'\} \in \mcal{K}(\rho)$. It follows that $\sigma_\pi = \mcal{K}(\rho)$, where we recall that $\sigma_{(\cdot)}$ is the function that takes a partition $\pi \in \mcal{P}(V)$ to the corresponding partition of the diamonds $\sigma_\pi \in \mcal{P}(W_1)$ satisfying \eqref{eq:neighborhood_partition}. We can then restrict our sum
\[
  \sum_{\substack{\pi\in \calP(V) :\\  \pi\restriction_{\vin}=\rho,\, \pim=\emptyset}} \tau^0[G^\pi] = \sum_{\pi \in \mcal{P}_\rho} \tau^0[G^\pi]
\]
to the class of partitions 
\[
  \calP_\rho = \{\pi \in \calP(V) : \pin = \rho, \pim=\emptyset, \sigma_\pi = \mcal{K}(\rho)\}.
\]

If $\pi \in \mcal{P}_\rho$, then any diamond in $G^\pi$ will have at least three neighbors: two neighbors come from the fact that $H^\rho$ is a double tree, and at least one outer vertex neighbor survives from $G$ as a result of the non-mixing $\pim = \emptyset$. Since $\sigma_\pi = \mcal{K}(\rho)$, the independence assumption together with the variance profile in Definition \ref{defn:wigner_tensor} \ref{cond:gote_variance} of our tensor ensemble imply that for any $\phi : V^\pi \into [N]$,
\begin{align}
  \E\bigg[\prod_{w_1 \in W_1} \tdn{w_1}{\phi}\bigg] &= \prod_{D\in \mcal{K}(\rho)} \E\bigg[ \tdn{w_1^{D_L}}{\phi}\tdn{w_1^{D_R}}{\phi}\bigg] \nonumber \\
  \label{eq:expectationofproduct} &= \prod_{D\in \calK(\rho)} \binom{d}{b_1^{D, \phi}, \dots, b_{N}^{D, \phi}}^{-1},
\end{align}
where $b_l^{D, \phi}$ is the multiplicity of $l \in [N]$ as an index in $\ten(w_1^{D_L}|\phi)$ (equivalently, in $\ten(w_1^{D_R}|\phi)$). Note that while $(b_l^{D, \phi})_{l \in [N]}$ depends on $\phi$, the multiset of multiplicities $\{b_1^{D, \phi}, \dots, b_N^{D, \phi}\}$ only depends on the block structure of $\pi$ restricted to $[d-2]_{D}$. To keep track of this, we define the partition $\pout^D := \pout \restriction_{[d-2]_D}$. Since $w^{D_L}_1$ and $w_1^{D_R}$ are overlaid, we know that $\#(B\cap [d-2]_{D_L}) =\#(B\cap [d-2]_{D_R})$ for every $B \in \pout$. In particular, this implies that $\pout^D$ is a uniform block permutation of $[d-2]_D$. This will prove useful later, but for now we simply note that
\[
    \prod_{D \in \calK(\rho)} \binom{d}{b_1^{D, \phi}, \dots, b_{N}^{D, \phi}}^{-1} = \prod_{D\in \calK(\rho)} \frac{1}{d!} \prod_{B\in \pout^D} \left(\frac{\#(B)}{2}\right)!=: \nu_\pi
\]
only depends the choice of $\pi \in \mcal{P}_\rho$ and not the actual vertex labels $\phi: V^\pi \hookrightarrow [N]$. Combining this with \eqref{eq:expectationofproduct}, we can simplify the formula for $\tau^0[G^\pi]$ to
\begin{align}\label{eq:formulafortauzero}
    \tau^0[G^\pi] &= \frac{1}{N^{\frac{m}{2} + 1}} \sum_{\phi: \pi \hookrightarrow [N]} \E\bigg[\prod_{w_1 \in W_1} \tdn{w_1}{\phi}\bigg] 
  \prod_{w_2\in W_2} \un{w_2}{\phi} \nonumber \\ 
    &= \frac{\nu_\pi}{N^{\frac{m}{2} + 1}}   \sum_{\phi: \pi \hookrightarrow [N]}  
  \prod_{w_2\in W_2} \un{w_2}{\phi}. 
\end{align}

\subsubsection*{Step 2: Discarding the inner labels $\phi|_{\vin}$} Note that the terms $\prod_{w_2\in W_2} \un{w_2}{\phi}$ appearing in the sum above do not depend on the values of $\phi|_{\vin}$. Thus, for any $\pi \in \calP_\rho$,
\begin{align*}
    &\frac{1}{N^{\frac{m}{2} + 1}} \sum_{\phi: \pi \hookrightarrow [N]} \prod_{w_2\in W_2} \un{w_2}{\phi} \\
    = &\frac{1}{N^{\frac{m}{2} + 1}} \sum_{\phi: \pout \into [N]}\, \sum_{\substack{\phi': \pin \into [N] \text{ s.t.} \\ \phi([N])\cap \phi'([N])=\emptyset}}\, \prod_{w_2\in W_2} \un{w_2}{\phi} \\
    = &\frac{(N- \#(\pout))_{\frac{m}{2}+1}}{N^{\frac{m}{2}+1}} \sum_{\phi: \pout \into [N]} \prod_{w_2\in W_2} \un{w_2}{\phi},  
\end{align*}
where
\[
  (N - \#(\pout))_{\frac{m}{2}+1} = (N - \#(\pout)) (N - \#(\pout)-1) \cdots (N - \#(\pout)-\frac{m}{2})
\] 
is the falling factorial and we have used the fact that $\#(\pin) = \frac{m}{2}+1$ since $H^\rho$ is a double tree.

At the same time, since $\sigma_\pi = \mcal{K}(\rho)$ and $\pim = \emptyset$, we know that every block $B \in \pout$ is adjacent to at least two boxes in $G^\pi$. Combining the elementary bound $|u_N^{(i, j)}(k)| \leq \norm{u_N^{(i, j)}}_2 = 1$ with the Cauchy-Schwarz inequality, we see that
\begin{align*}
  \Big|\sum_{\phi: \pout \into [N]} \prod_{w_2\in W_2} \un{w_2}{ \phi}\Big| &\leq  \sum_{\phi: \pout \to [N]} \prod_{w_2\in W_2} |\un{w_2}{ \phi}|\\
  &= \prod_{B \in \pout} \Big(\sum_{k \in [N]} \prod_{\substack{w_2 \in W_2: \\ w_2 \sim_{G^\pi} B}} |u_N^{(w_2)}(k)|\Big) \leq 1.
\end{align*}
Since $(N - \#(\pout))_{\frac{m}{2}+1} = N^{\frac{m}{2}+1} + O_{m, d}(N^{\frac{m}{2}})$, this allows us to further simplify \eqref{eq:formulafortauzero} to
\[
  \tau^0[G^\pi] = \nu_\pi \sum_{\phi: \pout \into [N]} \prod_{w_2\in W_2} \un{w_2}{\phi} + O_{m, d}(N^{-1}).
\]

\subsubsection*{Step 3: Factoring and reassembling.} Putting everything together, we obtain\small
\begin{align}
  \sum_{\substack{\pi\in \calP(V) :\\  \pi\restriction_{\vin}=\rho,\, \pim=\emptyset }} \tau^0[G^\pi] &= \sum_{\pi \in \calP_\rho} \tau^0[G^\pi] \nonumber \\
    &= \sum_{\pi \in \calP_\rho} \nu_\pi \sum_{\phi: \pout \into [N]} \prod_{w_2\in W_2} \un{w_2}{ \phi} +O_{m, d}(N^{-1}) \nonumber \\
    \label{eq:tensor_wick} &= \sum_{\pi \in \calP_\rho} \sum_{\phi: \pout \into [N]} \prod_{D\in \mcal{K}(\rho)} \frac{1}{d!} \prod_{B\in \pout^D} \left(\frac{\#(B)}{2}\right)! \prod_{v\in B} u_N^{(w_2^v)}(\phi(B)) + O_{m, d}(N^{-1}),
\end{align}\normalsize
where $w_2^v$ denotes the unique box in $W_2$ that is adjacent to the outer vertex $v \in V_{\op{out}}$. Using the fact that $\pout^D \in \ubp([d-2]_D)$, we can factor the non-asymptotic term in \eqref{eq:tensor_wick} into\small
\[
  \prod_{D\in \mcal{K}(\rho)} \bigg( \frac{1}{d!} \sum_{\pi \in \ubp([d-2]_D)}  \sum_{\phi: \pi \into [N]}  \prod_{B\in \pi} \left(\frac{\#(B)}{2}\right)!\prod_{v\in B \cap [d-2]_{D_L}} u_N^{(w_2^{v})}(\phi(B))\prod_{v' \in B \cap [d-2]_{D_R}} u_N^{(w_2^{v'})}(\phi(B)) \bigg).
\]\normalsize
It then suffices to show that\small
\begin{align*}
    &\frac{1}{d!} \sum_{\pi \in \ubp([d-2]_D)}  \sum_{\phi: \pi \into [N]}  \prod_{B\in \pi} \left(\frac{\#(B)}{2}\right)!\prod_{v\in B \cap [d-2]_{D_L}} u_N^{(w_2^{v})}(\phi(B))\prod_{v' \in B \cap [d-2]_{D_R}} u_N^{(w_2^{v'})}(\phi(B)) \\
    = &\frac{1}{d(d-1)} \left\langle u_N^{(i, 1)} \odot \cdots \odot u_N^{(i, d-2)}, u_N^{(i', 1)}\odot \cdots \odot u_N^{(i', d-2)} \right\rangle,
\end{align*}\normalsize
where the vectors $(u_N^{(i, j)})_{j \in [d-2]}$ (resp., $(u_N^{(i', j)})_{j \in [d-2]}$) correspond to the outer vertices $[d-2]_{D_L} = \{v_j\}_{j \in [d-2]}$ (resp., $[d-2]_{D_R} = \{v_j'\}_{j \in [d-2]}$).

To see this, we introduce the set $\ubp_2([d-2]_D)$ of uniform block permutations with blocks exclusively of size two, which can be thought of as the set of permutations $\ubp_2([d-2]_D) \cong \mfk{S}_{d-2}$. Note that for any $\pi \in \ubp([d-2]_D)$, 
\[
    \#\{\sigma \in \ubp_2([d-2]_D) : \sigma \leq \pi\} =  \prod_{B\in \pi} \left(\frac{\#(B)}{2}\right)!,
\]
where $\leq$ denotes the reversed refinement order. This allows us to reassemble the injective labels $\phi: \pi \into [N]$ for $\pi \in \ubp([d-2]_D)$ into general labels $\phi: \sigma \to [N]$ for $\sigma \in \ubp_2([d-2]_D)$. Indeed,\small
\begin{align*}
&\frac{1}{d!} \sum_{\pi \in \ubp([d-2]_D)} \sum_{\phi: \pi \into [N]} \prod_{B\in \pi} \left(\frac{\#(B)}{2}\right)!\prod_{v\in B \cap [d-2]_{D_L}} u_N^{(w_2^{v})}(\phi(B))\prod_{v' \in B \cap [d-2]_{D_R}} u_N^{(w_2^{v'})}(\phi(B)) \\
= &\frac{1}{d!} \sum_{\pi \in \ubp([d-2]_D)} \sum_{\substack{\sigma \in \ubp_2([d-2]_D) \\ \text{s.t. } \sigma \leq \pi}} \sum_{\phi: \pi \into [N]} \prod_{B\in \pi} \prod_{v\in B \cap [d-2]_{D_L}} u_N^{(w_2^{v})}(\phi(B))\prod_{v' \in B \cap [d-2]_{D_R}} u_N^{(w_2^{v'})}(\phi(B)) \\
= &\frac{1}{d!} \sum_{\sigma \in \ubp_2([d-2]_D)} \sum_{\phi: \sigma \to [N]} \prod_{B\in \sigma} \prod_{v\in B \cap [d-2]_{D_L}} u_N^{(w_2^{v})}(\phi(B))\prod_{v' \in B \cap [d-2]_{D_R}} u_N^{(w_2^{v'})}(\phi(B)) \\
= &\frac{1}{d!} \sum_{\sigma \in \mfk{S}_{d-2}} \prod_{j=1}^{d-2} \inn{u_N^{(i, j)}}{u_N^{(i', \sigma(j))}} \\
= &\frac{1}{d!}\per\left[\left(\inn{u_N^{(i, j)}}{u_N^{(i', k)}}\right)_{j, k= 1}^{d-2}\right] \\
= &\frac{1}{d(d-1)} \left\langle u_N^{(i, 1)} \odot \cdots \odot u_N^{(i, d-2)}, u_N^{(i', 1)}\odot \cdots \odot u_N^{(i', d-2)} \right\rangle,
\end{align*}\normalsize
where we have used the identification $\ubp_2([d-2]_D) \cong \mfk{S}_{d-2}$ and the permanental representation of the symmetrized inner product \cite[Theorem 2.2]{Min78}.
\end{proof}

\subsection{Concentration and almost sure convergence}\label{sec:almost_sure}

To prove concentration, we first need to define the random version of $\tau[G]$. For the graph $G$ in \eqref{eq:graph_of_tensors}, let
\[
  \trace(G) = \frac{1}{N^{\frac{m}{2}}} \sum_{\phi: V^\pi \to [N]} \prod_{w_1 \in W_1} \tdn{w_1}{\phi}
  \prod_{w_2\in W_2} \un{w_2}{\phi}
\]
so that $\tau[G] = \frac{1}{N} \E[\trace(G)]$. Proposition \ref{prop:convergenceinexpectation} already provides the asymptotics of $\frac{1}{N}\E[\trace(G)]$. We proceed to an analysis of the higher moments of the centered random variable $\trace(G)- \E[\trace(G)]$. We prove the following result in Section \ref{sec:concentration} using an extension of Lemma \ref{lem:supportontrees} to the disjoint union of multiple copies of $G$.
  
\begin{proposition}[Concentration]
\label{prop:concentration}
If $G$ is the graph in \eqref{eq:graph_of_tensors}, then
\begin{equation}\label{eq:concentration}
  \E\Bigg[\bigg(\frac{1}{N}\trace(G) - \E\Big[\frac{1}{N}\trace(G)\Big]\bigg)^{2M}\Bigg] = O_{m, d, M}(N^{-2M}).
\end{equation}
\end{proposition}

Propositions \ref{prop:convergenceinexpectation} and \ref{prop:concentration} allow us to prove our main results: Theorems \ref{thm:contracted_tensor_ensemble} and \ref{thm:concentrationofmixedmoments}.

\begin{proof}[Proof of Theorems \ref{thm:contracted_tensor_ensemble} and \ref{thm:concentrationofmixedmoments}]
Theorem \ref{thm:contracted_tensor_ensemble} follows from Theorem \ref{thm:concentrationofmixedmoments} via a standard application of the Borel-Cantelli lemma. To prove Theorem \ref{thm:concentrationofmixedmoments}, let $i_1, \dots, i_m\in I$ and recall that $\trace(G)= \trace(P_N(i_1, \dots, i_m))$ by construction. Proposition \ref{prop:concentration} implies
\[
 \varepsilon^{2M} \mathbb{P}\bigg[\bigg|\frac{1}{N}\trace(G) - \E\Big[\frac{1}{N}\trace(G)\Big]\bigg|>\varepsilon\bigg] = O_{m, d, M}(N^{-2M}).
\]
At the same time, we know that $\E\Big[\frac{1}{N}\trace(G)\Big] = \phi(s_{i_1}^{(N)}\cdots s_{i_m}^{(N)}) + O_{m, d}(N^{-1})$ from Proposition \ref{prop:convergenceinexpectation}. Using the triangle inequality, we conclude that
\[
 \varepsilon^{2M} \mathbb{P}\bigg[\bigg|\frac{1}{N}\trace(G) - \phi(s_{i_1}^{(N)}\cdots s_{i_m}^{(N)})\bigg|>\varepsilon'\bigg]=O_{m, d, M}(N^{-2M})
\]
for any $\varepsilon'>\varepsilon$, which proves \eqref{eq:concentration_of_mixed_moments}.
\end{proof}

\section{Proofs of the technical results}\label{sec:technical_results}

\subsection{Proof of Lemma \ref{lem:supportontrees}}\label{sec:tech_sec_one}

Recall that we must prove
\[
  \tau^0[G^\pi] = O_{m}(N^{-1})
\]
unless $H^\pi$ is a double tree and $\pim = \emptyset$. By our earlier discussion, we can restrict to partitions $\pi \in \mcal{P}(V)$ that satisfy \eqref{eq:centering_in_graph}.

Note that a diamond overlay $\mcal{B}_\pi(w_1) = \mcal{B}_\pi(w_1')$ does not imply that an inner vertex neighbor $v \in \mcal{B}(w_1) \cap V_{\text{in}}$ of $w_1$ gets identified with at least one other inner vertex neighbor $v' \in \mcal{B}(w_1') \cap V_{\op{in}}$ of $w_1'$; however, without such an identification, the equality of the neighborhoods can only hold if $v$ is identified with an outer vertex neighbor $v'' \in \mcal{B}(w_1') \cap V_{\text{out}}$. See Figure \ref{figure:overlay_square} and Figure \ref{figure:overlay_hexagon} for examples. This simple observation will be used frequently in the sequel, so we record it below.

\begin{lemma}\label{lem:mix_a_lot}
If $\mcal{B}_\pi(w_1) = \mcal{B}_\pi(w_1')$ and $v \in \mcal{B}(w_1) \cap V_{\op{in}}$ satisfies
\[
  [v]_\pi \neq [v']_\pi, \quad \forall v' \in \mcal{B}(w_1') \cap V_{\op{in}},
\]
then
\[
  \exists v'' \in \mcal{B}(w_1') \cap V_{\op{out}}: [v]_\pi = [v'']_{\pi}.
\]
In particular, $\pim \neq \emptyset$.
\end{lemma}

Condition \eqref{eq:centering_in_graph} forces every block $B = [v]_\pi \in \pi_{\op{out}}$ to have size $\#([v]_\pi) \geq 2$. The degrees of freedom in choosing $\phi|_{\pi_{\op{out}}}$ are then negated by the terms coming from the neighboring boxes. To see this, note that
\begin{align*}
  &\sum_{\phi: V^\pi \hookrightarrow [N]} \E\bigg[\prod_{w_1 \in W_1} \tdn{w_1}{\phi}\bigg] \prod_{w_2\in W_2} \un{w_2}{\phi}\\
  = &O_{m}\bigg(N^{\#(\pi_{\op{in}})}\sum_{\phi: \pi_{\op{mix}} \to [N]} \prod_{[v]_\pi \in \pi_{\op{mix}}} \prod_{\substack{w_2 \in W_2:\\ w_2 \sim [v]_\pi}} \Big|u_N^{(w_2)}(\phi([v]_\pi))\Big|\bigg) \\
  &\times O_{m}\bigg(\sum_{\phi': \pi_{\op{out}} \to [N]} \prod_{[v]_\pi \in \pi_{\op{out}}} \prod_{\substack{w_2 \in W_2:\\ w_2 \sim [v]_\pi}} \Big|u_N^{(w_2)}(\phi'([v]_\pi))\Big|\bigg),
\end{align*}
where we can control the last term\small
\[
  \sum_{\phi': \pi_{\op{out}} \to [N]} \prod_{[v]_\pi \in \pi_{\op{out}}} \prod_{\substack{w_2 \in W_2:\\ w_2 \sim [v]_\pi}} \Big|u_N^{(w_2)}(\phi'([v]_\pi))\Big| = \prod_{[v]_\pi \in \pi_{\op{out}}} \Big(\sum_{k \in [N]} \prod_{\substack{w_2 \in W_2:\\ w_2 \sim [v]_{\pi}}} |u_N^{(w_2)}(k)|\Big) \leq 1
\]\normalsize
by the Cauchy-Schwarz inequality and the fact that $|u_N^{(i, j)}(k)| \leq \norm{u_N^{(i, j)}}_2 = 1$. So, the $\pi_{\op{out}}$ vertices do not actually contribute to the exponent in the crude asymptotic $\eqref{eq:crude_asymptotic}$ and we have the refinement
\[
  \tau^0[G^\pi] =  O_m(N^{\#(\pi_{\op{in}}) + \#(\pi_{\op{mix}}) - \frac{m}{2} - 1}).
\]
In fact, we can apply the same argument to $\pi_{\op{mix}} = \pi_{\op{mix}}^{(1)} \sqcup \pi_{\op{mix}}^{(\geq 2)}$, where $\pi_{\op{mix}}^{(1)}$ is the set of mixed blocks such that each block has exactly one outer vertex and $\pi_{\op{mix}}^{(\geq 2)}$ is the set of mixed blocks such that each block has at least two outer vertices. This allows us to further reduce
\begin{align*}
  &O_m\bigg(N^{\#(\pi_{\op{in}})}\sum_{\phi: \pi_{\op{mix}} \to [N]} \prod_{[v]_\pi \in \pi_{\op{mix}}} \prod_{\substack{w_2 \in W_2:\\ w_2 \sim [v]_\pi}} \Big|u_N^{(w_2)}(\phi([v]_\pi))\Big|\bigg)\\
  = &O_m\bigg(N^{\#(\pi_{\op{in}})} \sum_{\phi: \pi_{\op{mix}}^{(1)} \to [N]} \prod_{[v]_\pi \in \pi_{\op{mix}}^{(1)}} \prod_{\substack{w_2 \in W_2:\\ w_2 \sim [v]_\pi}}
  \Big|u_N^{(w_2)}(\phi([v]_\pi))\Big|\bigg)\\
  &\times O_m\bigg(\sum_{\phi': \pi_{\op{mix}}^{(\geq 2)} \to [N]} \prod_{[v]_\pi \in \pi_{\op{mix}}^{(\geq 2)}} \prod_{\substack{w_2 \in W_2:\\ w_2 \sim [v]_\pi}} \Big|u_N^{(w_2)}(\phi'([v]_\pi))\Big| \bigg) \\
  = &O_m\bigg(N^{\#(\pi_{\op{in}})} \prod_{[v]_\pi \in \pi_{\op{mix}}^{(1)}} \prod_{\substack{w_2 \in W_2:\\ w_2 \sim [v]_{\pi}}} \inn{|u_N^{(w_2)}|}{\hat{1}_N}\bigg) = O_m(N^{\#(\pi_{\op{in}}) + \frac{\#(\pi_{\op{mix}}^{(1)})}{2}}),
\end{align*}
where $|u_N^{(w_2)}|$ is the entrywise absolute value of the contracting vector $u_N^{(w_2)}$ and $\hat{1}_N \in \R^N$ is the all-ones vector. The final form of our asymptotic is then
\[
  \tau^0[G^\pi] = O_{m}(N^{\#(\pi_{\op{in}}) + \frac{\#(\pi_{\op{mix}}^{(1)})}{2} - \frac{m}{2} - 1}),
\]
which is controlled by
\begin{proposition}\label{prop:double_tree}
For any partition $\pi$ satisfying \eqref{eq:centering_in_graph},
\begin{equation}\label{eq:main_inequality}
  \#(\pi_{\op{in}}) + \frac{\#(\pi_{\op{mix}}^{(1)})}{2} \leq \frac{m}{2} + 1
\end{equation}
with equality iff $m$ is even, $\pi_{\op{mix}} = \emptyset$, and $H^\pi$ is a double tree. In all other cases,
\[
  \#(\pi_{\op{in}}) + \frac{\#(\pi_{\op{mix}}^{(1)})}{2} \leq \frac{m}{2}.
\]
\end{proposition}

\begin{proof}
First, we introduce some auxiliary sets:
\begin{align*}
    \pi_{\op{mix}}^{(1, 1)} &= \{B \in \pi : \#(B \cap \vout) = 1, \#(B \cap \vin) = 1\}; \\
    \pi_{\op{mix}}^{(1, \geq 2)} &= \{B \in \pi : \#(B \cap \vout) = 1, \#(B \cap \vin) \geq 2\}; \\
    \pi_{\op{in}}^{(1)} &= \{B \in \pi : \#(B \cap \vout) = 0, \#(B \cap \vin) = 1\}; \\
    \pi_{\op{in}}^{(\geq 2)} &= \{B \in \pi : \#(B \cap \vout) = 0, \#(B \cap \vin) \geq 2\};
\end{align*}
and
\begin{align*}
V_\pi^{(1, 1)} &= \cup_{B \in \pi_{\op{mix}}^{(1, 1)}} B \cap \vin; \\
V_\pi^{(1, \geq 2)} &= \cup_{B \in \pi_{\op{mix}}^{(1, \geq 2)}} B \cap \vin; \\
V_\pi^{(0, 1)} &= \cup_{B \in \pi_{\op{in}}^{(1)}} B \cap \vin; \\
V_\pi^{(0, \geq 2)} &= \cup_{B \in \pi_{\op{in}}^{(\geq 2)}} B \cap \vin.
\end{align*}
For a diamond $w_1 \in W_1$, we use the notation $\mcal{O}(w_1) = \{v \in \vout: v \sim w_1\}$ for its set of outer vertex neighbors in $G$.

Note that we only need to prove the only if portion of the iff statement, the converse following from the classical $d = 2$ case. We prove Proposition \ref{prop:double_tree} by induction on $m$ with the base cases $m=1, 2$. If $m = 1$, then \eqref{eq:centering_in_graph} implies that $\#(\pi) = 1$.  In particular, $\#(\pi_{\op{in}}) = 0$ and $\#(\pi_{\op{mix}}^{(1)}) \leq 1$, whence $\#(\pi_{\op{in}}) + \frac{\#(\pi_{\op{mix}}^{(1)})}{2} \leq \frac{1}{2}$.

If $m = 2$, then $\#(\pi_{\op{mix}}^{(1)}) = 0$. Indeed, an internal vertex $v \in B \in \pi_{\op{mix}}$ is necessarily adjacent to the two diamonds $w_1$ and $w_2$. Condition \eqref{eq:centering_in_graph} imposes the equality
\[
    \#(\{e: [v]_\pi \stackrel{e}{\sim} w_1\}) = \#(\{e: [v]_\pi \stackrel{e}{\sim} w_2\}),
\]
which implies that $\#(B \cap \vout)$ is even and not equal to 1. Now, if $\#(\pi_{\op{in}}) \leq 1$, then we are done. Otherwise, $\#(\pi_{\op{in}}) = 2$, in which case $\#(\pi_{\op{in}}) + \frac{\#(\pi_{\op{mix}}^{(1)})}{2} = 2$. Since $m = \#(\pi_{\op{in}}) = 2$, we have no inner vertices left to mix $\pi_{\op{mix}} = \emptyset$ and $H^\pi$ is the double tree consisting of two vertices with two edges between them.

Now suppose that $m \geq 3$. Note that $\pi_{\op{mix}}^{(\geq 2)}$ and $\pi_{\op{out}}$ do not play a role in the statement, so we can assume that 
\begin{equation}\label{eq:merged_blocks}           \#(\pi_{\op{mix}}^{(\geq 2)} \cup \pi_{\op{out}}) \leq 1
\end{equation}
by merging every such block. The resulting partition clearly still satisfies \eqref{eq:centering_in_graph}. If $\#(\pi_{\op{in}}^{(1)}) \leq \#(\pi_{\op{mix}}^{(1, \geq 2)})$, then we are done. To see this, note that every block $B \in \pi_{\op{mix}}^{(1, \geq 2)} \cup \pi_{\op{in}}^{(\geq 2)}$ satisfies $\#(B) \geq 2$, whence 
\begin{equation}\label{eq:singleton_reduction}
\begin{aligned}
    \#(\pi_{\op{in}}) + \frac{\#(\pi_{\op{mix}}^{(1)})}{2} &= \#(\pi_{\op{in}}^{(1)}) + \frac{\#(\pi_{\op{mix}}^{(1, \geq 2)})}{2} + \#(\pi_{\op{in}}^{(\geq 2)}) + \frac{\#(\pi_{\op{mix}}^{(1, 1)})}{2} \\
    &\leq \frac{\#(\pi_{\op{in}}^{(1)})}{2} + \#(\pi_{\op{mix}}^{(1, \geq 2)}) + \#(\pi_{\op{in}}^{(\geq 2)}) + \frac{\#(\pi_{\op{mix}}^{(1, 1)})}{2} \\
    &\leq \frac{\#(V_\pi^{(0, 1)})}{2} + \frac{\#(V_\pi^{(1, \geq 2)})}{2} + \frac{\#(V_\pi^{(0, \geq 2)})}{2} + \frac{\#(V_\pi^{(1, 1)})}{2} \\
    &\leq \frac{\#(\vin)}{2} = \frac{m}{2}.
\end{aligned}
\end{equation}

Henceforth, we assume that
\begin{equation}\label{eq:singleton_inequality}
    \#(\pi_{\op{in}}^{(1)}) > \#(\pi_{\op{mix}}^{(1, \geq 2)}).
\end{equation}
We will need the following result.

\begin{lemma}\label{lem:simple_edge}
For any block $B \in \pi_{\op{mix}}^{(1, \geq 2)}$, there is at most one block $B' \in \pi_{\op{in}}^{(1)}$ connected to $B$ in $H^\pi$ with a simple edge.
\end{lemma}

\begin{proof}[Proof of Lemma \ref{lem:simple_edge}]
Let $B \sim_e B'$ be as above. Since $B' \in \pi_{\op{in}}^{(1)}$, there is exactly one other edge $e' \neq e$ incident to $B'$ in $H^\pi$. By assumption, $e'$ is not incident to $B$. Condition \eqref{eq:centering_in_graph} then forces $\{e, e'\} \in \sigma_\pi$, which can only be achieved if $B \cap \mcal{O}(e') \neq \emptyset$. We conclude that $e'$ is responsible for the lone outer vertex in $B \in \pi_{\op{mix}}^{(1, \geq 2)}$.

The requirement $e \sim_{\sigma_\pi} e'$ precludes the possibility of $e'$ being incident to another block $\tilde{B}' \neq B'$ in $\pi_{\op{in}}^{(1)}$. So, if $\tilde{B}' \in \pi_{\op{in}}^{(1)}$ is also connected to $B$ in $H^\pi$ with a simple edge $\tilde{e}$ and $\tilde{e}'$ is the only other edge incident to $\tilde{B}'$ in $H^\pi$, it must be that $\tilde{e}' \neq e'$. We can then repeat the argument above and arrive at the contradiction that $\tilde{e}'$ is responsible for another outer vertex $B \cap \mcal{O}(\tilde{e}') \neq \emptyset$ in $B \in \pi_{\op{mix}}^{(1, \geq 2)}$.
\end{proof}

Assumption \eqref{eq:singleton_inequality} and Lemma \ref{lem:simple_edge} guarantee the existence of a block $B_0 \in \pi_{\op{in}}^{(1)}$ that is not connected to any element of $\pi_{\op{mix}}^{(1, \geq 2)}$ with a simple edge. Let $e_1, e_2$ be the two edges incident to $B_0$. We write $v_1$ and $v_2$ for the vertices incident to $e_1$ and $e_2$ in $H$ respectively with $B_1 = [v_1]_\pi$ and $B_2 = [v_2]_\pi$ the corresponding blocks. Since $B_0 \in \pi_{\op{in}}^{(1)}$, there are no other edges incident to $B_0$ and \eqref{eq:centering_in_graph} imposes the constraint $\{e_1, e_2\} \in \sigma_\pi$.

We claim that $B_1 \not\in \pi_{\op{mix}}^{(1, 1)}$. Otherwise, $B_1 \in \pi_{\op{mix}}^{(1, 1)}$ and $B_1 \neq B_2$ since $v_1 \neq v_2$ are distinct inner vertices (recall that $m \geq 3$). The constraint $e_1 \sim_{\sigma_\pi} e_2$ can then only be fulfilled if $\#(B_1 \cap \mcal{O}(e_2)) = 1$. In that case, the other edge $e_3 \neq e_2$ incident to $v_1$ in $H$ ($e_3 \neq e_2$ since $v_1 \neq v_2$) cannot satisfy \eqref{eq:centering_in_graph}: $\#(\mcal{B}_\pi(e_3)) \neq 1$ since $B_1 \in \pi_{\op{mix}}^{(1, 1)}$, and $\mcal{B}_\pi(e_3) \neq \mcal{B}_\pi(e_1), \mcal{B}_\pi(e_2)$ since $B_0 \in \pi_{\op{in}}^{(1)}$.

We then have the following trichotomy:
\begin{enumerate}
\item $B_1 \in \pi_{\op{in}}$, in which case $B_1 = B_2$ since $e_1 \sim_{\sigma_\pi} e_2$;
\item $B_1 \in \pi_{\op{mix}}^{(1, \geq 2)}$, in which case $B_1 = B_2$ because $B_0$ is not connected to any element of $\pi_{\op{mix}}^{(1, \geq 2)}$ by a simple edge by assumption (furthermore, the same parity argument as before shows that $B_1 \cap \mcal{O}(e_1) = B_1 \cap \mcal{O}(e_2) = \emptyset$); 
\item $B_1 \in \pi_{\op{mix}}^{(\geq 2)}$, in which case $B_2 \not\in \pi_{\op{in}} \cup \pi_{\op{mix}}^{(1, \geq 2)}$: otherwise, interchanging $B_1$ and $B_2$, the analysis above would imply that $B_1 = B_2 \in \pi_{\op{in}} \cup \pi_{\op{mix}}^{(1, \geq 2)}$. Thus, it must be that $B_2 \in \pi_{\op{mix}}^{(\geq 2)}$ as well, whence $B_1 = B_2$ by our earlier reduction \eqref{eq:merged_blocks}.
\end{enumerate}
In any case, $B_1 = B_2$. We construct a smaller graph $\tilde{G}$ from $G$ by removing: the lone vertex in $B_0$, the adjacent diamonds $e_1, e_2$, the outer vertices $\mcal{O}(e_1) \cup \mcal{O}(e_2)$, and every incident edge. We also merge $v_1$ and $v_2$. In other words, we have simply pinched off a small portion of the decorated cycle \eqref{eq:graph_of_tensors}, bringing us to the case of $m-2 \geq 1$.

By a slight abuse of notation, we write $\tilde{\pi} = \pi|_{\tilde{G}}$ for the natural partition of $\tilde{G}$ induced by $\pi$ (we merged $v_1$ and $v_2$ in constructing $\tilde{G}$, but $[v_1]_\pi = B_1 = B_2 = [v_2]_\pi$ anyway). Note that the partition $\tilde{\pi}$ still satisfies \eqref{eq:centering_in_graph} since the other diamonds are unaffected (recall that $\{e_1, e_2\} \in \sigma_\pi$). We can then apply the induction hypothesis, which tells us that
\begin{equation}\label{eq:induction_hypothesis}
    \#(\tilde{\pi}_{\op{in}}) + \frac{\#(\tilde{\pi}_{\op{mix}}^{(1)})}{2} \leq \frac{m}{2}
\end{equation}
with equality iff $m-2$ is even, $\tilde{\pi}_{\op{mix}} = \emptyset$, and $\tilde{H}^{\tilde{\pi}}$ is a double tree. 

To see what this says about our original partition $\pi$, consider a block $B \in \pi_{\op{mix}}^{(1)}$. Since $e_1 \sim_{\sigma_\pi} e_2$, we know that $B \cap \mcal{O}(e_1) = B \cap \mcal{O}(e_2) = \emptyset$ by the now familiar parity argument. So, after removing $\mcal{O}(e_1) \cup \mcal{O}(e_2)$, the block $B$ becomes a block $\tilde{B} \in \tilde{\pi}_{\op{mix}}^{(1)}$, leading to the inequality
\begin{equation}\label{eq:mixed_tilde}
    \#(\pi_{\op{mix}}^{(1)}) \leq \#(\tilde{\pi}_{\op{mix}}^{(1)}).
\end{equation}
At the same time, removing $\mcal{O}(e_1) \cup \mcal{O}(e_2)$ creates a block in $\tilde{\pi}_{\op{in}}$ for each block $B \in \pi_{\op{mix}}$ such that $B \cap \vout \subset \mcal{O}(e_1) \cup \mcal{O}(e_2)$. We also lose a block in $\pi_{\op{in}}$ from the removal of $B_0 \in \pi_{\op{in}}^{(1)}$, and so
\[
    \#(\tilde{\pi}_{\op{in}}) = \#(\pi_{\op{in}}) - 1 + \#(\{B \in \pi_{\op{mix}} : B \cap \vout \subset \mcal{O}(e_1) \cup \mcal{O}(e_2)\}).
\]
In any case, $\#(\pi_{\op{in}}) \leq \#(\tilde{\pi}_{\op{in}}) + 1$. Combining this with \eqref{eq:induction_hypothesis} and \eqref{eq:mixed_tilde}, we obtain
\[ 
    \#(\pi_{\op{in}}) + \frac{\#(\pi_{\op{mix}}^{(1)})}{2} \leq \frac{m}{2} + 1
\]
with $\#(\pi_{\op{in}}) + \frac{\#(\pi_{\op{mix}}^{(1)})}{2} \leq \frac{m}{2}$ unless $\tilde{\pi}_{\op{mix}} = \emptyset$, $\tilde{H}^{\tilde{\pi}}$ is a double tree, and $\#(\{B \in \pi_{\op{mix}} : B \cap \vout \subset \mcal{O}(e_1) \cup \mcal{O}(e_2)\}) = 0$. 

Now assume that the latter conditions hold. Since $\tilde{\pi}_{\op{mix}} = \emptyset$, it follows from \eqref{eq:mixed_tilde} that $\pi_{\op{mix}}^{(1)} = \emptyset$. Similarly, $\tilde{\pi}_{\op{mix}} = \emptyset$ and $\#(\{B \in \pi_{\op{mix}} : B \cap \vout \subset \mcal{O}(e_1) \cup \mcal{O}(e_2)\}) = 0$ imply that $\pi_{\op{mix}}^{(\geq 2)} = \emptyset$, and so $\pi_{\op{mix}} = \emptyset$. Finally, $H^\pi$ is obtained from $\tilde{H}^{\tilde{\pi}}$ by adding the vertex $B_0$ and the double edge $\{e_1, e_2\}$ between $B_0$ and $B_1 = B_2$: if $\tilde{H}^{\tilde{\pi}}$ is a double tree, then so too is $H^\pi$.  
\end{proof}

\subsection{Proof of Proposition \ref{prop:concentration}}\label{sec:concentration}

The idea of the proof is to realize \eqref{eq:concentration} as a sum over vertex labelings of the disjoint union of $2M$ copies of the graph of tensors $G$ in \eqref{eq:graph_of_tensors} satisfying certain properties. We will need to adapt the ideas in the previous section in conjunction with those in \cite[Section 3.2]{Au18} to bound the number of admissible vertex labelings in this extended picture. The gain in the exponent compared to the single graph case of Section \ref{sec:tech_sec_one} comes from the fact that one now needs an overlay of diamonds across different copies of $G$ to prevent the vanishing of the expectation due to the centering: as we will see, such identifications are always suboptimal.

We start with some notation. Let $G_k = (V_k, W_1^{(k)} \sqcup W_2^{(k)}, E_k)$ be a graph of tensors \eqref{eq:graph_of_tensors} of length $m_k$. The statement of Proposition \ref{prop:concentration} assumes that $m_k \equiv m$, but we will prove a more general result that is amenable to induction. For a vertex labeling $\phi_k: V_k \to [N]$, we define the centered random variable\small
\begin{equation}\label{eq:phi_product}
  Q_{\phi_k} = \bigg(\prod_{w_1 \in W_1^{(k)}} \mbf{T}_{d, N}(w_1 | \phi_k) - \E\Big[\prod_{w_1 \in W_1^{(k)}} \mbf{T}_{d, N}(w_1 | \phi_k)\Big]\bigg)\prod_{w_2 \in W_2^{(k)}} u_N^{(w_2)}(w_2 | \phi_k).
\end{equation}\normalsize
This allows us to write the expectation of a product of unnormalized traces as
\begin{equation}\label{eq:K_moment}
  \E\Big[\prod_{k = 1}^K \big(\trace(G_k) - \E[\trace(G_k)]\big)\Big] 
  = N^{-\frac{\sum_{k=1}^K m_k}{2}} \sum_{\substack{(\phi_1, \ldots, \phi_K) \\ \text{s.t. } \phi_k: V_k \to [N]}} \E\bigg[\prod_{k \in [K]} Q_{\phi_k}\bigg].
\end{equation}
We say that vertex labels $\phi_k, \phi_{k'}$ are \emph{matched} if there exist vertices $(w_1, w_1') \in W_1^{(k)} \times W_1^{(k')}$ such that
\[
  \phi_k(\mcal{B}(w_1)) = \phi_{k'}(\mcal{B}(w_1')), 
\]
where the equality is in the sense of multisets. In words, this says that there must be two diamonds such that the $\phi_k$-labeled neighborhood of the first matches the $\phi_{k'}$-labeled neighborhood of the second. We say that a coordinate $\phi_{k'}$ in a $K$-tuple $(\phi_1, \ldots, \phi_K)$ is \emph{unmatched} if it is not matched to any other coordinate $\phi_{k}$ with $k \neq k'$, in which case
\[
  \E\bigg[\prod_{k \in [K]} Q_{\phi_k}\bigg] = \E\bigg[\prod_{\substack{k \in [K]: \\ k \neq k'}} Q_{\phi_k}\bigg]\E[Q_{\phi_{k'}}] = 0
\]
by the independence of our tensor entries and the centeredness of $Q_{\phi_{k'}}$. We may then restrict the sum in \eqref{eq:K_moment} to $K$-tuples with no unmatched coordinates.

To translate this condition to the setting of partitions and injective labelings, we will need some additional notation. Let $\mcal{G} = (\mcal{V}, \mcal{W}, \mcal{E})$ be the disjoint union $\sqcup_{k = 1}^K G_k$ of the graphs $G_k$. In particular,
\begin{align}
  \mcal{V} = \sqcup_{k = 1}^{K} V_k; \label{eq:disjoint_union_vertices}\\
  \mcal{W}_1 = \sqcup_{k = 1}^{K} W_1^{(k)};\\
  \mcal{W}_2 = \sqcup_{k = 1}^{K} W_2^{(k)};\\
  \mcal{W} = \mcal{W}_1 \sqcup \mcal{W}_2;\\
  \mcal{E} = \sqcup_{k = 1}^{K} E_k \label{eq:disjoint_union_edges}.
\end{align}
This allows us to rewrite \eqref{eq:K_moment} as
\[
  \E\Big[\prod_{k = 1}^K \big(\trace(G_k) - \E[\trace(G_k)]\big)\Big] 
  = N^{-\frac{\sum_{k=1}^K m_k}{2}} \sum_{\pi \in \mcal{P}(\mcal{V})} \sum_{\Phi: \mcal{V}^\pi \hookrightarrow [N]} \E\bigg[\prod_{k \in [2M]} Q_{\Phi|_{V_k}}\bigg],
\]
where, by a slight abuse of notation, we have used the fact that a map $\Phi: \mcal{V}^\pi \hookrightarrow [N]$ defines a vertex labeling $\Phi|_{V_k}: V_k \to [N]$ for every $k \in [K]$. The condition about matchings from the previous paragraph then allows us to restrict to partitions $\pi$ such that every copy $G_k$ has a diamond overlay with at least one other copy $G_{k'}$ with $k' \neq k$.

The diamond overlays between the $(G_k)_{k=1}^K$ define an equivalence relation on $[K]$ with $r_\pi \leq \lfloor\frac{K}{2}\rfloor$ equivalence classes $A_1, \ldots, A_{r_\pi}$. In particular, elements $k, k' \in [K]$ belong to the same equivalence class $A_q$ iff there are subgraphs $G_{k_0}, \ldots, G_{k_t}$ in $\mcal{G}$ with $k_0 = k$ and $k_t = k'$ such that $G_{k_s}$ has a diamond overlay with $G_{k_{s+1}}$ for $s = 0, \ldots, t-1$. This allows us to factor
\[
   \E\bigg[\prod_{k \in [K]} Q_{\Phi|_{V_k}}\bigg] = \prod_{q \in [r_\pi]} \E\bigg[\prod_{k \in A_q} Q_{\Phi|_{V_k}}\bigg].
\]

 We need the obvious analogue of Definition \ref{defn:H_graph} for $\mcal{G}$, which we denote by $\mcal{H} = \sqcup_{k = 1}^K H_k$ and $\mcal{H}^\pi = \mcal{H}^{\pi\restriction_{\msr{V}(\mcal{H})}}$. We write $\mcal{H}_q^\pi$ for the subgraph spanned by the edges of $(H_k)_{k \in A_q}$ in $\mcal{H}^\pi$. We decompose the vertex set of $\mcal{H}_q^\pi$ as before:
\[
  \msr{V}(\mcal{H}_q^\pi) = \pi_{\op{in}, q} \sqcup \pi_{\op{mix}, q}^{(1)} \sqcup \pi_{\op{mix}, q}^{(\geq 2)}.
\]
Recall that the contracting vectors are deterministic: this allowed us to factor out their contribution in the formulation \eqref{eq:phi_product} of $Q_\phi$. So, we can repeat the analysis from Section \ref{sec:tech_sec_one} to conclude that\small
\[
 \sum_{\Phi: \mcal{V}^\pi \hookrightarrow [N]} \prod_{q \in [r_\pi]} \E\bigg[\prod_{k \in A_q} Q_{\Phi|_{V_k}}\bigg] = O_{m, M}\Big(N^{\sum_{q \in [r_\pi]} \#(\pi_{\op{in}, q}) + \frac{\#(\pi_{\op{mix}, q}^{(1)})}{2}}\Big).
\]\normalsize
So, we will be done if we can prove that
\begin{equation}\label{eq:equivalence_class_count}
  \#(\pi_{\op{in}, q}) + \frac{\#(\pi_{\op{mix}, q}^{(1)})}{2} \leq \frac{\sum_{k \in A_q} m_k}{2}.
\end{equation}
Note that \eqref{eq:equivalence_class_count} is localized to an equivalence class $A_q$. So, without loss of generality, we may assume that there is only one equivalence class $A_q = [K]$. Thus, we need to prove

\begin{proposition}\label{prop:butterfly_obstruction}
Let $\mcal{G} = (\mcal{V}, \mcal{W}, \mcal{E})$ be the disjoint union as in \eqref{eq:disjoint_union_vertices}-\eqref{eq:disjoint_union_edges} with $K \geq 2$. If $\pi \in \mcal{P}(\mcal{V})$ satisfies
\begin{align}
    \#(\mcal{B}_\pi(w_1)) = 1 \text{ \emph{or} } \exists w_1' \in \mcal{W}_1\setminus\{w_1\}: \mcal{B}_\pi(w_1) &= \mcal{B}_\pi(w_1'), \qquad \forall w_1 \in \mcal{W}_1; \label{eq:centering_in_graph_redux}\\
    \exists k' \neq k, (w_1, w_1') \in W_1^{(k)} \times W_1^{(k')}: \mcal{B}_\pi(w_1) &= \mcal{B}_\pi(w_1') \label{eq:cross_overlay}, \qquad \forall k \in [K],
\end{align}
then
\[
    \#(\pi_{\op{in}}) + \frac{\#(\pi_{\op{mix}}^{(1)})}{2} \leq \frac{\sum_{k=1}^K m_k}{2}.
\]
\end{proposition}

\begin{proof}
Before getting started, we note that \eqref{eq:centering_in_graph_redux} is the analogue of \eqref{eq:centering_in_graph} in this context: here, the overlaying diamond $w_1'$ might come from a different graph of tensors $G_{k} \neq G_{k'}$. The additional constraint on the partition \eqref{eq:cross_overlay} is the mandatory crossed overlay necessary to survive the centeredness of the $Q_{\Phi|_{V_k}}$ as discussed earlier.

We adapt the proof of Proposition \ref{prop:double_tree} and proceed by induction on $m = \sum_{k = 1}^K m_k$ with the base cases $m = 2, 3$. The case of $m = 2$ corresponds to $K = 2$ and $m_1 = m_2 = 1$, meaning $G_1$ and $G_2$ each consist of a single diamond with a double edge to a single inner vertex and simple edges to $d-2$ outer vertices. The crossed overlay \eqref{eq:cross_overlay} bounds $\#(\pi_{\op{in}}) \leq 1$ with $\#(\pi_{\op{in}}) = 1$ only if $\#(\pi_{\op{mix}}^{(1)}) = 0$. If $\#(\pi_{\op{in}}) = 0$, then the fact that each $G_i$ has a double edge to its lone inner vertex means that each mixed block must have at least two outer vertices, and so $\#(\pi_{\op{mix}}^{(1)}) = 0$.

There are two possibilities for the case of $m = 3$. First, it could be that $K = 3$ and $m_1 = m_2 = m_3 = 1$. This is treated almost identically to the $m = 2$ case. Second, it could be that $K = 2$, $m_1 = 1$, and $m_2 =2$, meaning $G_1$ is as before and $G_2$ is as in Figure \ref{figure:covariance}. Let $B \in \pi$ be the block containing the inner vertex in $G_1$. If $\#(B \cap \mcal{V}_{\op{in}}) = 3$, then $\#(\pi_{\op{in}}) + \#(\pi_{\op{mix}}^{(1)}) \leq 1$ and we are done. If $\#(B \cap \mcal{V}_{\op{in}}) = 2$, then \eqref{eq:cross_overlay} imposes the constraint $B \in \pi_{\op{mix}}$ because of the double edge in $G_1$. But then $\#(\pi_{\op{in}}) \leq 1$ since there is only one unaccounted inner vertex left in $\mcal{G}$ and again we are done. Finally, if $\#(B \cap \mcal{V}_{\op{in}}) = 1$, then \eqref{eq:cross_overlay} imposes the constraint $B \in \pi_{\op{mix}}^{(\geq 2)}$ (again, because of the double edge in $G_1$). In that case, $\#(\pi_{\op{in}}) \leq 2$ and we will be done if we can rule out equality. Since $\#(B \cap \mcal{V}_{\op{in}}) = 1$, the crossed overlay in \eqref{eq:cross_overlay} keeps the inner vertices of the two overlaid diamonds in separate blocks. This forces the inner vertices of each diamond to be merged with outer vertices, and so $\#(\pi_{\op{in}}) = 0$.
  
Now suppose that $m \geq 4$. From here, the proof largely resembles Proposition \ref{prop:double_tree} and we simply outline the argument. As before, we can assume \eqref{eq:merged_blocks} by merging every such block and noting that the resulting partition still satisfies \eqref{eq:centering_in_graph_redux}-\eqref{eq:cross_overlay}. Similarly, we can assume \eqref{eq:singleton_inequality} since otherwise we are done using the same line of reasoning as in \eqref{eq:singleton_reduction}. Lemma \ref{lem:simple_edge} still holds for $\mcal{H}^\pi$ with the same proof: together with \eqref{eq:singleton_inequality}, this guarantees the existence of a block $B_0 \in \pi_{\op{in}}^{(1)}$ that is not connected to any element of $\pi_{\op{mix}}^{(1, \geq 2)}$ with a simple edge as before.

Assume that the lone vertex $v_0 \in B_0 \in \pi_{\op{in}}^{(1)}$ comes from $H_{k_0}$. Here, we need to be careful: in the proof of Proposition \ref{prop:double_tree}, we knew that $m_{k_0} \geq 3$ because of the induction on $m_{k_0}$; now, we are inducting on $m = \sum_{k = 1}^K m_k$, and so we cannot immediately assume $m_{k_0} \geq 3$. We can rule out $m_{k_0} = 1, 2$ as follows. If $m_{k_0} = 1$, then there is a unique inner vertex in $G_{k_0}$, necessarily $v_0$, and the crossed overlay in \eqref{eq:cross_overlay} forces $B_0 \in \pi_{\op{in}}^{(\geq 2)} \cup \pi_{\op{mix}}$, a contradiction. Similarly, if $m_{k_0} = 2$, then $G_{k_0}$ is as in Figure \ref{figure:covariance} and the fact that $B_0 \in \pi_{\op{in}}^{(1)}$ precludes a crossed overlay \eqref{eq:cross_overlay} with any other $G_k$, again a contradiction.

We can now repeat the argument leading to the trichotomy in Proposition \ref{prop:double_tree}, pinch off the edges incident to $v_0$ in $H_{k_0}$, and reduce to the case of $\tilde{\pi}$ and $m - 2 \geq 2$, where the induction hypothesis tells us that
\[
    \#(\tilde{\pi}_{\op{in}}) + \frac{\#(\tilde{\pi}_{\op{mix}}^{(1)})}{2} \leq \frac{m-2}{2}.
\]
As before, we have the inequalities $\#(\pi_{\op{mix}}^{(1)}) \leq \#(\tilde{\pi}_{\op{mix}}^{(1)})$ and  $\#(\pi_{\op{in}}) \leq \#(\tilde{\pi}_{\op{in}}) + 1$, which allow us to conclude.
\end{proof}

\begin{figure}
\begingroup%
  \makeatletter%
  \providecommand\color[2][]{%
    \errmessage{(Inkscape) Color is used for the text in Inkscape, but the package 'color.sty' is not loaded}%
    \renewcommand\color[2][]{}%
  }%
  \providecommand\transparent[1]{%
    \errmessage{(Inkscape) Transparency is used (non-zero) for the text in Inkscape, but the package 'transparent.sty' is not loaded}%
    \renewcommand\transparent[1]{}%
  }%
  \providecommand\rotatebox[2]{#2}%
  \newcommand*\fsize{\dimexpr\f@size pt\relax}%
  \newcommand*\lineheight[1]{\fontsize{\fsize}{#1\fsize}\selectfont}%
  \ifx\svgwidth\undefined%
    \setlength{\unitlength}{360bp}%
    \ifx\svgscale\undefined%
      \relax%
    \else%
      \setlength{\unitlength}{\unitlength * \real{\svgscale}}%
    \fi%
  \else%
    \setlength{\unitlength}{\svgwidth}%
  \fi%
  \global\let\svgwidth\undefined%
  \global\let\svgscale\undefined%
  \makeatother%
  \begin{picture}(1,0.375)%
    \lineheight{1}%
    \setlength\tabcolsep{0pt}%
    \put(0,0){\includegraphics[width=\unitlength,page=1]{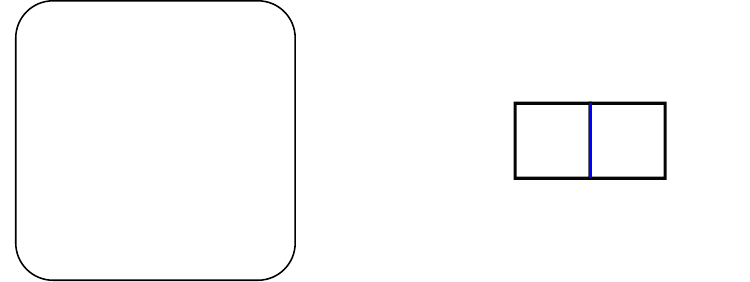}}%
    \put(0.19862893,1.96292302){\color[rgb]{0,0,0}\makebox(0,0)[lt]{\begin{minipage}{0.14438896\unitlength}\raggedright \end{minipage}}}%
    \put(0,0){\includegraphics[width=\unitlength,page=2]{fig_copies.pdf}}%
    \put(0.48613872,0.17887883){\color[rgb]{0,0,0}\makebox(0,0)[lt]{\lineheight{1.66666675}\smash{\begin{tabular}[t]{l}$\mapsto$\end{tabular}}}}%
  \end{picture}%
\endgroup%

\caption{An example of the double tree obstruction for $\mcal{H}^\pi$. For convenience, we have omitted the outer vertices. Even if one assumes $\pi_{\op{mix}} = \emptyset$, one cannot identify the remaining vertices in such a way that will produce a double tree for $\mcal{H}^\pi$.
}\label{figure:copies}
\end{figure}

\begin{remark}\label{rem:double_tree}
The suboptimality of the crossed overlay \eqref{eq:cross_overlay} and the subsequent gain in \eqref{eq:equivalence_class_count} versus \eqref{eq:main_inequality} comes from the fact that the condition for equality in Proposition \ref{prop:double_tree} cannot be achieved in the setting of Proposition \ref{prop:butterfly_obstruction} for $\mcal{H}^\pi$. See Figure \ref{figure:copies} for an illustration.
\end{remark}

Setting $K = 2M$, $m_k \equiv m$, and $G_k \equiv G$ in Proposition \ref{prop:butterfly_obstruction}, it follows that
\[
  \E\Big[\big(\trace(G) - \E[\trace(G)]\big)^{2M}\Big] = O_{m, d, M}(1),
\]
as was to be shown.

\subsubsection*{Acknowledgments}
The authors would like to thank the anonymous referees for their detailed feedback. In particular, the authors are grateful to the anonymous referee who suggested a shorter proof of Proposition \ref{prop:double_tree} but insisted on remaining anonymous.

\bibliographystyle{amsalpha}
\bibliography{master_bib}

\end{document}